\documentclass[english, 12pt]{article}
\usepackage{amsfonts}
\usepackage{amsmath}
\usepackage{amssymb}
\usepackage{amsthm}
\usepackage{amscd}
\usepackage{amscd}
\usepackage{color}
\usepackage{amsthm}
\usepackage{textcomp}
\newtheorem{defi}{Definition}[section]

\newtheorem{theo}{Theorem}[section]
\newtheorem{prop}{Proposition}[section]

\newtheorem{rk}{Remark}[section]

\newtheorem{exa}{Example}[section]

\numberwithin{equation}{section}

\def\R{\mathbb{R}}

\def\C{\mathbb{C}}
\def\B{\mathbb{B}}
\def\Z{\mathbb{Z}}
\def\D{\mathbb{D}}
\def\Q{\mathbb{Q}}
\def\Qh{\mathcal{Q}_h}

\def\ZB{\mathcal{Z}}

\def\P{ \mathcal{P}}

\def\fraka{\mathfrak{a}}
\def\frakp{\mathfrak{p}}

\newcommand{\Rj}{\mathrm{Re}}

\newcommand{\eg}{\mathbf{e}}

\newcommand{\ug}{\mathbf{u}}

\newcommand{\ig}{\mathbf{i}}
\newcommand{\jg}{\mathbf{j}}
\newcommand{\kg}{\mathbf{k}}

\begin{document}
\bibliographystyle{alpha}

\title{  Bicomplex Algebraic Numbers}
\author{\normalsize Hichem Gargoubi\footnote {Universit\'e de Tunis, I.P.E.I.T., Department of Mathematics, 2 Rue Jawaher Lel Nehru, Monfleury, Tunis, 1008 Tunisia. E-mail address: hichem.gargoubi@ipeit.rnu.tn}
	\, and \, Sayed Kossentini\footnote {Universit\'e de Tunis El Manar, Facult\'e des Sciences de Tunis, Department of Mathematics, Lab, Topology, Algebra, Arithmetic \& Order,  2092, Tunis,  Tunisia. E-mail address:
	sayed.kossentini@fst.utm.tn}\footnote {Universit\'e de Jendouba, Institut National des Technologies et des Sciences du Kef, 7100, Le Kef, Tunisia }}

\date{}
\maketitle
\begin{abstract}

	We investigate bicomplex analogues of fundamental notions from classical algebraic number theory. In particular, we show that the primitive element theorem admits a natural generalization to bicomplex extensions, giving rise to two distinct classes of extensions depending on the nature of the generating element. We further establish a key decomposition property for bicomplex extensions, which serves as a foundation for studying their rings of integers. We also observe that prime elements in the ring of integers of a number field may become semiprime in the rings of integers of suitable bicomplex extensions. Finally, we present two explicit examples of finite bicomplex extensions.

\end{abstract}
{\bf Key words}: algebraic number fields, hyperbolic numbers,   bicomplex numbers

\section{Introduction}
The concept of an algebraic number field originates in arithmetic questions concerning the integers. 
This naturally leads to enlargements of the classical domain of arithmetic to rings such as the Gaussian integers $\mathbb{Z}[\ig]$, the Eisenstein integers $\mathbb{Z}\!\left[\tfrac{1}{2}(-1+\ig\sqrt{3})\right]$, or more generally the cyclotomic integers $\mathbb{Z}[e^{2\pi\ig/n}]$. 
These rings play a central role in many areas, for example in reciprocity laws, heights, and Fermat’s Last Theorem.
The modern formulation of this theory goes back to Dedekind, who introduced the ring of integers $\mathcal{O}_K$ of a number field $K$ and established unique factorization in terms of ideals; for historical developments see~ e.g. \cite{Ed}.
The theory of number fields has since grown into a broad and active area of research intersecting several branches of mathematics.\\

In this article we investigate algebraicity in the setting of the bicomplex numbers. 
This commutative four-dimensional real algebra was introduced by Segre in 1892 \cite{SER}; it is generated by three imaginary units $\ig,\jg,\kg$ with relations
$$
\ig^{2}=\kg^{2}=-1,\qquad \jg^{2}=1,\qquad \ig\jg=\jg\ig=\kg.
$$
Among complex Clifford algebras, the bicomplex algebra is the only commutative one. 
Although it is not a division algebra—unlike the quaternion algebra of Hamilton \cite{Hamil}—its algebraic and analytic structure has been extensively developed, with applications in mathematics and quantum physics (see, e.g., \cite{Ham,Krav,Price,ZET,DRQ1,DRQ2,C*alg,COLOM1,LUNA1}).\\

The bicomplex algebra simultaneously extends two classical number systems: the complex numbers 
$$
\mathbb{C}\cong \mathrm{Cl}_{\mathbb{R}}(0,1)=\mathbb{R}[X]/(X^2+1)
$$
and the hyperbolic numbers (also called duplex or split numbers)
$$
\mathbb{D}\cong \mathrm{Cl}_{\mathbb{R}}(1,0)=\mathbb{R}[X]/(X^2-1).
$$
Both are extensions of $\mathbb{R}$, but from the perspective of lattice-ordered algebras ($\ell$-algebras) the hyperbolic numbers form, up to isomorphism, the unique real associative planar algebra admitting a natural Archimedean $f$\nobreakdash-algebra  structure (see \cite{GK1}). 
The presence of two idempotent zero divisors also yields an efficient decomposition that simplifies many arguments.\\

Our main objective is to study finite bicomplex extensions $L$, viewed as finite $\mathbb{Q}$-algebras contained in the bicomplex algebra. 
We prove that every such extension is simple, i.e.,
$$
L=\mathbb{Q}[\omega]
$$
for some $\omega\in L$. A careful analysis of $\omega$ provides necessary and sufficient conditions for $L$ to be a (classical) number field. 
This viewpoint naturally produces new families of algebraic integers beyond the standard ones and clarifies several structural phenomena. 
For instance, we show that bicomplex extensions lead to exactly two types of quadratic extensions of $\mathbb{Q}$: the classical quadratic fields $\mathbb{Q}(\sqrt{d})$ and the hyperbolic extension $\mathbb{Q}[\mathrm{j}]$, which is the only quadratic extension of $\mathbb{Q}$ into an Archimedean $f$-ring \cite{GK3}. 
We also observe that primes in the ring of integers of a number field may become semiprime in the ring of integers of suitable bicomplex extensions.\\

The article is organized as follows. 
Section~2 reviews the basic algebraic and structural properties of bicomplex numbers. 
Section~3 introduces bicomplex algebraic numbers and their minimal polynomials. 
Section~4 develops the theory of finite bicomplex extensions and establishes the simplicity result $L=\mathbb{Q}[\omega]$, together with criteria for $L$ to be a number field; we also describe the corresponding ring of integers and introduce the Dedekind-type zeta function of a bicomplex extension. 
Section~5 classifies quadratic bicomplex extensions into two families and exhibits a distinguished class of quartic extension containing the classical imaginary quadratic field $\mathbb{Q}(\mathrm{i})$.

\section{Bicomplex numbers}
In this section we briefly recall the basic properties of the bicomplex algebra that will be used throughout the paper (see, e.g., \cite{DRA}). 
The algebra of bicomplex numbers is defined by 
\begin{equation}\label{realdecomp}
\mathbb{B}
:=\{\, x+y\ig+z\jg+t\kg : x,y,z,t\in\mathbb{R} \,\},
\end{equation}
where the imaginary units $\ig,\jg,\kg\notin\mathbb{R}$ satisfy
$$
\ig^{2}=\kg^{2}=-1,\qquad 
\jg^{2}=1,\qquad 
\ig\jg=\jg\ig=\kg.
$$
The algebra $\mathbb{B}$ contains three real two-dimensional subalgebras
$$
\mathbb{C}_{\ug}:=\{\,x+\ug y : x,y\in\mathbb{R}\,\}, \qquad \ug\in\{\ig,\jg,\kg\},
$$
namely the field of complex numbers $\mathbb{C}_{\ig}=\mathbb{C}$, its copy
$\mathbb{C}_{\kg}$, and the hyperbolic (or duplex) numbers
$\mathbb{C}_{\jg}=\mathbb{D}$.  
Accordingly, $\mathbb{B}$ can be regarded as a complexification of both the complex and the hyperbolic numbers, namely,
\begin{eqnarray*}\label{complexfdecomo}
\B= \C + \kg \C= \D+ \ig \D.
\end{eqnarray*}
Among these subalgebras, the hyperbolic numbers $\mathbb{D}$  possess a basis of idempotent zero divisors.  
Indeed, setting
\begin{equation}\label{idempbasis}
\mathbf{e}_{1}:=\frac{1+\jg}{2}, 
\qquad 
\mathbf{e}_{2}:=\frac{1-\jg}{2},
\end{equation}
we have
$$
\mathbf{e}_{1}+\mathbf{e}_{2}=1,\qquad 
\mathbf{e}_{1}^{2}=\mathbf{e}_{1},\qquad 
\mathbf{e}_{2}^{2}=\mathbf{e}_{2},\qquad 
\mathbf{e}_{1}\mathbf{e}_{2}=0.
$$
Using the idempotent basis \eqref{idempbasis}, every bicomplex number $\omega=z_{1}+\kg z_{2}$ admits the unique decomposition
\begin{equation}\label{idempdecomp}
\omega=\mathcal{P}_{1}(\omega)\mathbf{e}_{1}+\mathcal{P}_{2}(\omega)\mathbf{e}_{2},
\end{equation}
where the maps  $\mathcal{P}_k: \B\longrightarrow \C$ are  surjective  algebra homomorphisms, defined by
\begin{equation}\label{mP}
\P_1(\omega)= z_1-\ig z_2 \quad\mbox{and}\quad \P_2(\omega)= z_1+\ig z_2.
\end{equation}
Thus $\mathbb{B}$ is a two-dimensional $\mathbb{C}$-algebra with idempotent basis $(\mathbf{e}_{1},\mathbf{e}_{2})$, and all algebraic operations are componentwise.  
In particular, the group of units is
$$
\mathbb{B}^{\times}
=\{\,\omega\in\mathbb{B} : \mathcal{P}_{1}(\omega)\neq 0,\ \mathcal{P}_{2}(\omega)\neq 0\,\}
=\mathbb{C}^{\times}\mathbf{e}_{1}+\mathbb{C}^{\times}\mathbf{e}_{2}.
$$There are three natural conjugations on $\mathbb{B}$, defined for 
$\omega=x+y\ig+z\jg+t\kg$ by
\begin{equation}\label{conjugations}
\overline{\omega}^{\,\ig}= x+y\ig-z\jg-t\kg,\qquad
\overline{\omega}^{\,\jg}= x-y\ig+z\jg-t\kg,\qquad
\overline{\omega}^{\,\kg}= x-y\ig-z\jg+t\kg.
\end{equation}
Together with the identity, these conjugations form an abelian group of algebra automorphisms of $\mathbb{B}$.  
The product of $\omega$ with its three conjugates is a positive real number called the \emph{norm} of $\omega$:
\begin{equation}
N(\omega):= \omega\, \overline{\omega}^\ig \,\overline{\omega}^\jg\, \overline{\omega}^\kg= |\gamma_1\gamma_2|^2.
\end{equation}
where $\omega=\gamma_1\eg_2+ \gamma_2\eg_2$ is the idempotent representation.  
The function $N:\mathbb{B}\to\mathbb{R}_{\geq 0}$ is multiplicative, and its zero set is precisely the set of all non-invertible elements of $\mathbb{B}$ called the \emph{null cone} of $\mathbb{B}$.

 
\section[Basic properties]{Bicomplex algebraic numbers}

In this section we establish some basic properties of bicomplex algebraic numbers and their minimal polynomials.  
For later use, note that a direct computation based on the properties of the idempotent basis \eqref{idempbasis} yields the following polynomial decomposition property:
\begin{equation}\label{decompoly}
P(\gamma_1 \eg_1+\gamma_2 \eg_2)=P(\gamma_1)\eg_1+P(\gamma_2)\eg_2, 
\quad \mbox{for all } P\in\C[X] \mbox{ and all } \gamma_1,\gamma_2\in\C.
\end{equation}

\subsection{Characterization}

We begin with a characterization of the set of bicomplex algebraic numbers, denoted by $\mathcal{A}_\B$.  
We also use the notation $\widehat{\Q}$ for the algebraic closure of $\Q$, i.e., the field of all complex algebraic numbers, which is well known to be countable.

\begin{theo}\label{bicomalgebr}
The set $\mathcal{A}_\B$ of all bicomplex algebraic numbers decomposes as
\begin{equation*}
\mathcal{A}_\B=\widehat{\Q}\eg_1+\widehat{\Q}\eg_2.
\end{equation*}
It is a countable set and is closed under all conjugation operators, i.e.,
$$
\overline{\omega}^{\ig},\ \overline{\omega}^{\jg},\ \overline{\omega}^{\kg} \in \mathcal{A}_\B 
\quad \mbox{for every } \omega\in\mathcal{A}_\B .
$$
\end{theo}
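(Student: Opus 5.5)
The definition is presumably that $\omega\in\B$ is algebraic if $P(\omega)=0$ for some nonzero $P\in\Q[X]$. The plan is to reduce everything to the two idempotent components via the decomposition \eqref{decompoly}.

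Write $\omega=\gamma_1\eg_1+\gamma_2\eg_2$ with $\gamma_k=\P_k(\omega)$. Since $\Q[X]\subset\C[X]$, \eqref{decompoly} gives $P(\omega)=P(\gamma_1)\eg_1+P(\gamma_2)\eg_2$. By uniqueness of the idempotent decomposition, $P(\omega)=0$ holds iff $P(\gamma_1)=P(\gamma_2)=0$.

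For the inclusion $\mathcal{A}_\B\subset\widehat{\Q}\eg_1+\widehat{\Q}\eg_2$: a nonzero $P\in\Q[X]$ annihilating $\omega$ annihilates both $\gamma_k$, so each $\gamma_k\in\widehat{\Q}$. For the converse: if $\gamma_1,\gamma_2\in\widehat{\Q}$ have minimal polynomials $m_1,m_2\in\Q[X]$, then $P=m_1m_2$ is nonzero and satisfies $P(\gamma_1)=P(\gamma_2)=0$, hence $P(\omega)=0$.

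Countability follows at once, since $(\gamma_1,\gamma_2)\mapsto\gamma_1\eg_1+\gamma_2\eg_2$ maps the countable set $\widehat{\Q}\times\widehat{\Q}$ onto $\mathcal{A}_\B$.

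For closure under the conjugations, the cleanest route avoids coordinates. Each of $\overline{\,\cdot\,}^{\ig},\overline{\,\cdot\,}^{\jg},\overline{\,\cdot\,}^{\kg}$ is a ring automorphism of $\B$ fixing $\R\supset\Q$, so it commutes with every $P\in\Q[X]$: $\overline{P(\omega)}=P(\overline{\omega})$. Hence $P(\omega)=0$ implies $P(\overline{\omega})=\overline{0}=0$. The paper states that the conjugations are algebra automorphisms; if I wanted to verify this directly, I would check on the idempotent basis that $\overline{\eg_1}^{\ig}=\eg_2$. Indeed, $\overline{\,\cdot\,}^{\ig}$ fixes $\C$ and swaps the components, $\overline{\,\cdot\,}^{\jg}$ applies complex conjugation to each component without swapping, and $\overline{\,\cdot\,}^{\kg}$ does both. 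This also exhibits the closure explicitly, because $\widehat{\Q}$ is closed under complex conjugation.

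The only step needing any care is the verification that the conjugations act as described, equivalently that they are $\Q$-algebra automorphisms. This is a short computation with \eqref{mP}, and there is no real obstacle.
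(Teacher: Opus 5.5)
Your proof is correct and follows essentially the same route as the paper: you use the idempotent decomposition \eqref{decompoly} for both inclusions, get countability from $\widehat{\Q}\times\widehat{\Q}$, and get closure from the conjugations being $\R$-algebra automorphisms that commute with rational polynomials. Working over $\Q[X]$ rather than $\Z[X]$ makes no difference, since you can clear denominators; and your explicit description of how the conjugations act on the idempotent components is the same one the paper uses in the subsequent proposition.
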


\begin{proof}
Let $\omega=\gamma_1\eg_1+\gamma_2\eg_2$ be a bicomplex number.  
If $\omega\in\mathcal{A}_\B$, then there exists a nonzero polynomial $P\in\Z[X]$ such that $P(\omega)=0$.  
Using \eqref{decompoly}, we obtain $P(\gamma_1)=P(\gamma_2)=0$, hence $\gamma_1,\gamma_2\in\widehat{\Q}$.
\\Conversely, assume that $\gamma_1,\gamma_2\in\widehat{\Q}$.  
Choose nonzero polynomials $P_1,P_2\in\Z[X]$ such that $P_1(\gamma_1)=0$ and $P_2(\gamma_2)=0$.  
Then \eqref{decompoly} implies that $\omega$ is a zero of the polynomial $P=P_1P_2$, and thus $\omega\in\mathcal{A}_\B$.  
Therefore
$$
\mathcal{A}_\B=\widehat{\Q}\eg_1+\widehat{\Q}\eg_2 .
$$
Since $\widehat{\Q}$ is countable and $\mathcal{A}_\B$ is in bijection with $\widehat{\Q}\times\widehat{\Q}$ via the map 
$\gamma_1\eg_1+\gamma_2\eg_2\mapsto (\gamma_1,\gamma_2)$, the set $\mathcal{A}_\B$ is countable.
\\Finally, closure under conjugation follows from the fact that each conjugation map is an $\R$-algebra homomorphism, and therefore satisfies
$$
P(\overline{\omega}^{\ug})=\overline{P(\omega)}^{\ug},
\quad P\in\R[X],\ \ug\in\{\ig,\jg,\kg\},\ \omega\in\B.
$$
\end{proof}

Theorem \ref{bicomalgebr} yields several equivalent descriptions of bicomplex algebraic numbers depending on the chosen representation of $\B$.  
In the real form \eqref{realdecomp}, a bicomplex number
$$
\omega=x+\ig y+\jg z+\kg t
$$
is algebraic if and only if $x,y,z,t$ are real algebraic numbers.  
This follows from the ring structure of $\mathcal{A}_\B$, its stability under conjugation, and the identities
\begin{eqnarray*}
\frac{1}{4}\left(\omega+\overline{\omega}^{\ig}+\overline{\omega}^{\jg}+\overline{\omega}^{\kg}\right) &=& x,\\[0.2cm]
\frac{1}{4\ig}\left(\omega+\overline{\omega}^{\ig}-\overline{\omega}^{\jg}-\overline{\omega}^{\kg}\right) &=& y,\\[0.2cm]
\frac{1}{4\jg}\left(\omega-\overline{\omega}^{\ig}+\overline{\omega}^{\jg}-\overline{\omega}^{\kg}\right) &=& z,\\[0.2cm]
\frac{1}{4\kg}\left(\omega-\overline{\omega}^{\ig}-\overline{\omega}^{\jg}+\overline{\omega}^{\kg}\right) &=& t. 
\end{eqnarray*}


\subsection{Minimal polynomial of a bicomplex algebraic number}

We establish here several properties of minimal polynomials of bicomplex algebraic numbers. Throughout, $P_\omega$ denotes the minimal polynomial of the bicomplex algebraic number $\omega$. It is assumed to be primitive (i.e., its coefficients have greatest common divisor~$1$) and to have positive leading coefficient (the coefficient of $X^n$, $n=\deg P_\omega$).

We begin with a characterization of $P_\omega$ depending on whether $\omega$ belongs to $\C_{\ug}$ for some $\ug\in\{\ig,\kg\}$.

\begin{theo}\label{thpolyminimal}
Let $\omega=\gamma_1 \eg_1 + \gamma_2 \eg_2$ be a bicomplex algebraic number. Then
$$
P_\omega = P_{\gamma_1} = P_{\gamma_2}
\quad\text{if and only if}\quad
\omega \in \C_{\ug}\ \text{for some}\ \ug\in\{\ig,\kg\}.
$$
Otherwise,
$$
P_\omega = P_{\gamma_1}P_{\gamma_2}.
$$
\end{theo}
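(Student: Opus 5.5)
The plan is to reduce everything to the two components through the polynomial decomposition property \eqref{decompoly}. For $P\in\Z[X]$ we have $P(\omega)=P(\gamma_1)\eg_1+P(\gamma_2)\eg_2$. Since $(\eg_1,\eg_2)$ is a $\C$-basis, $P(\omega)=0$ holds exactly when $P(\gamma_1)=P(\gamma_2)=0$, that is, when both $P_{\gamma_1}$ and $P_{\gamma_2}$ divide $P$ in $\Q[X]$. Hence the ideal of $\Q[X]$ annihilating $\omega$ is generated by $\mathrm{lcm}(P_{\gamma_1},P_{\gamma_2})$. After normalizing to be primitive with positive leading coefficient, this gives
$$
P_\omega=\mathrm{lcm}(P_{\gamma_1},P_{\gamma_2}).
$$
Both $P_{\gamma_k}$ are irreducible, primitive and have positive leading coefficient, so they are either equal or coprime. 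This yields the dichotomy: $P_\omega=P_{\gamma_1}=P_{\gamma_2}$ in the first case and $P_\omega=P_{\gamma_1}P_{\gamma_2}$ in the second. Gauss's lemma keeps the product primitive.

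It remains to identify the case $P_{\gamma_1}=P_{\gamma_2}$ with $\omega\in\C_\ig\cup\C_\kg$. Writing $\omega=z_1+\kg z_2$ and using \eqref{mP}, we get $\gamma_1=z_1-\ig z_2$ and $\gamma_2=z_1+\ig z_2$.

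For the ``if'' direction there are two cases.
\begin{itemize}
\item If $\omega\in\C_\ig$, then $z_2=0$, so $\gamma_1=\gamma_2$ and trivially $P_{\gamma_1}=P_{\gamma_2}$.
\item If $\omega\in\C_\kg$, then $\omega=x+\kg t$ with $x,t\in\R$. Then $\gamma_1=x-\ig t$ and $\gamma_2=x+\ig t$ are complex conjugates. Since $P_{\gamma_1}$ has rational coefficients, $P_{\gamma_1}(\overline{\gamma_1})=\overline{P_{\gamma_1}(\gamma_1)}=0$, so $P_{\gamma_2}=P_{\gamma_1}$.
\end{itemize}

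For the ``only if'' direction, assume $P_{\gamma_1}=P_{\gamma_2}$. The plan is to show $\gamma_2\in\{\gamma_1,\overline{\gamma_1}\}$. Indeed, $\gamma_2=\gamma_1$ gives $z_2=0$, hence $\omega\in\C_\ig$. Likewise $\gamma_2=\overline{\gamma_1}$ gives $z_1=\Rj(\gamma_1)$ real and $\ig z_2$ purely imaginary, hence $\omega=x+\kg t$ with $x,t\in\R$, i.e.\ $\omega\in\C_\kg$. To reach $\gamma_2\in\{\gamma_1,\overline{\gamma_1}\}$, I would use that $\gamma_1,\gamma_2$ are roots of the same irreducible polynomial. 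I would then first treat the case $\deg P_{\gamma_1}\le 2$, where the roots are exactly $\gamma_1$ and its unique $\Q$-conjugate. That conjugate equals $\overline{\gamma_1}$ when $\gamma_1\notin\R$; when $\gamma_1\in\R$ the roots are real.

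This ``only if'' step is the main obstacle. Equality of minimal polynomials a priori only says that $\gamma_2$ is some Galois conjugate of $\gamma_1$. For degree $\ge 3$, or for two real conjugates such as $\pm\sqrt2$, this need not be $\gamma_1$ or $\overline{\gamma_1}$: for instance $\omega=\sqrt2\,\eg_1-\sqrt2\,\eg_2=\sqrt2\,\jg$ has $P_{\gamma_1}=P_{\gamma_2}=X^2-2$, yet $\omega\notin\C_\ig\cup\C_\kg$. So I expect this direction to hold only under the restriction that the components are related by complex conjugation. I would state explicitly that the general statement is ``$P_\omega=P_{\gamma_1}$ iff $\gamma_1,\gamma_2$ are $\Q$-conjugate'', with $\omega\in\C_\ig\cup\C_\kg$ as the sufficient case described in the theorem. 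The ``otherwise'' formula $P_\omega=P_{\gamma_1}P_{\gamma_2}$ is valid precisely when the two minimal polynomials differ, by the lcm argument above.
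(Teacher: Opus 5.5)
Your analysis is correct, and so is your diagnosis: the statement as written is false, and the paper's proof does not establish it.

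\textbf{The counterexample.} Your example $\omega=\sqrt2\,\jg=\sqrt2\,\eg_1-\sqrt2\,\eg_2$ satisfies $\omega^2=2$ with $\omega\notin\Q$. Hence $P_\omega=X^2-2=P_{\gamma_1}=P_{\gamma_2}$. Yet $\omega\notin\C_\ig\cup\C_\kg$, since $\gamma_2=-\sqrt2\notin\{\gamma_1,\overline{\gamma_1}\}$. The same $\omega$ also refutes the ``otherwise'' clause, because $P_{\gamma_1}P_{\gamma_2}=(X^2-2)^2\neq P_\omega$.

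\textbf{Where the paper's proof fails.} It breaks at exactly the two corresponding points. In the ``only if'' part it correctly deduces that $\Q[\omega]\simeq\Q[X]/(P_\omega)$ is a field. It then asserts, without argument, that a subfield of $\B$ must lie in $\C_\ig$ or $\C_\kg$. But $\Q[\sqrt2\,\jg]\simeq\Q(\sqrt2)$ is a field inside $\D=\C_\jg$. More generally, $\Q[\gamma\eg_1+\sigma(\gamma)\eg_2]\simeq\Q(\gamma)$ for any embedding $\sigma$ of $\Q(\gamma)$ into $\C$. In the last part the proof asserts that $\omega\notin\C_\ig\cup\C_\kg$ forces $P_{\gamma_1}$ and $P_{\gamma_2}$ to be coprime, which fails for the same reason.

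\textbf{What you prove.} Everything you actually prove is sound and matches the paper where the paper is right. Your ``if'' direction is the paper's argument: $\gamma_1=\gamma_2$ on $\C_\ig$, and $\gamma_2=\overline{\gamma_1}$ on $\C_\kg$. Your identity $P_\omega=\mathrm{lcm}(P_{\gamma_1},P_{\gamma_2})$ is a cleaner packaging of the paper's two-sided divisibility $P_{\gamma_1}P_{\gamma_2}\mid P_\omega\mid P_{\gamma_1}P_{\gamma_2}$.

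Its advantage is that it shows the correct dividing line directly. One has $P_\omega=P_{\gamma_1}=P_{\gamma_2}$ if and only if $\gamma_1$ and $\gamma_2$ are $\Q$-conjugate (equivalently, $\Q[\omega]$ is a field), and $P_\omega=P_{\gamma_1}P_{\gamma_2}$ otherwise. The condition $\omega\in\C_\ig\cup\C_\kg$ is merely sufficient for the first case.

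The same error also invalidates the field criterion in Theorem~\ref{decompalg}: $\Q[\sqrt2\,\jg]$ is a field, yet none of its generators lies in $\C_\ig\cup\C_\kg$.
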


\begin{proof}
Let $\omega=\gamma_1\eg_1+\gamma_2\eg_2$ be a bicomplex algebraic number. By Theorem~\ref{bicomalgebr}, the components $\gamma_1$ and $\gamma_2$ are complex algebraic numbers.\\
Assume first that $P_\omega=P_{\gamma_1}=P_{\gamma_2}$, necessarily irreducible in $\Q[X]$. Then the isomorphism
$$
\Q[\omega] \simeq \Q[X]/(P_\omega)
$$
shows that $\Q[\omega]$ is a field. Hence $\Q[\omega]$ must be contained in $\C_{\ug}$ for some $\ug\in\{\ig,\kg\}$, which implies $\omega\in\C_{\ug}$.\\
Conversely, suppose $\omega\in\C_{\ug}$ for some $\ug\in\{\ig,\kg\}$. In this case, one has $\gamma_1=\gamma_2$ if $\ug=\ig$ and $\gamma_2=\overline{\gamma_1}$ if $\ug=\kg$. In both situations,
$$
P_{\gamma_1}=P_{\gamma_2}=P.
$$
By~\eqref{decompoly}, $P(\omega)=0$, hence $P_\omega$ divides $P$. As $P_\omega$ is primitive, we obtain $P_\omega=P$.\\
Now suppose $\omega\notin\C_{\ug}$ for $\ug\in\{\ig,\kg\}$. Then $P_{\gamma_1}$ and $P_{\gamma_2}$ are relatively prime. Using again~\eqref{decompoly}, we have
$
P_\omega(\gamma_1)=P_\omega(\gamma_2)=0,
$
so $P_{\gamma_1}\mid P_\omega$ and $P_{\gamma_2}\mid P_\omega$. Hence
$$
P_{\gamma_1}P_{\gamma_2}\mid P_\omega.
$$
Conversely, by~\eqref{decompoly},
$
(P_{\gamma_1}P_{\gamma_2})(\omega)=0,
$
so 
$$P_\omega\mid P_{\gamma_1}P_{\gamma_2}.$$
The minimal polynomial being primitive, we conclude
$$
P_\omega = P_{\gamma_1}P_{\gamma_2}.
$$
\end{proof}
As an immediate consequence, the classical complex identity $P_\gamma=P_{\bar{\gamma}}$ extends to the three bicomplex conjugations.

\begin{prop}
Let $\omega$ be a bicomplex algebraic number. Then $\omega$ and its conjugates $\overline\omega^{\ig}$, $\overline\omega^{\jg}$ and $\overline\omega^{\kg}$ have the same minimal polynomial.
\end{prop}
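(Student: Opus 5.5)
The simplest route avoids the idempotent components and uses only that each conjugation is a ring automorphism fixing $\Q$. Let $\ug\in\{\ig,\jg,\kg\}$. The conjugations in \eqref{conjugations} are $\R$-algebra automorphisms of $\B$, so for every $P\in\Z[X]$ we have $P(\overline{\omega}^{\ug})=\overline{P(\omega)}^{\ug}$, as already noted at the end of the proof of Theorem~\ref{bicomalgebr}. Taking $P=P_\omega$ gives $P_\omega(\overline{\omega}^{\ug})=\overline{0}^{\ug}=0$. Hence $P_{\overline{\omega}^{\ug}}$ divides $P_\omega$ in $\Q[X]$.

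Next I will apply the same argument to $\overline{\omega}^{\ug}$ in place of $\omega$. Each conjugation is an involution, since $\overline{\overline{\omega}^{\ug}}^{\ug}=\omega$, which is immediate from the sign patterns in \eqref{conjugations}. So $P_\omega$ divides $P_{\overline{\omega}^{\ug}}$.

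The two polynomials therefore divide each other, and so they differ by a nonzero rational factor. Both are primitive with positive leading coefficient, so by the normalization fixed at the beginning of this subsection they are equal. This holds for each of the three conjugations.

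As a cross-check, the statement is also consistent with Theorem~\ref{thpolyminimal}. Writing $\omega=\gamma_1\eg_1+\gamma_2\eg_2$, the conjugates have idempotent components $(\gamma_2,\gamma_1)$, $(\overline{\gamma_2},\overline{\gamma_1})$ and $(\overline{\gamma_1},\overline{\gamma_2})$ in some order. Since $P_\gamma=P_{\overline{\gamma}}$, the product $P_{\gamma_1}P_{\gamma_2}$ is unchanged by any of these. The membership condition $\omega\in\C_{\ug}$ for some $\ug\in\{\ig,\kg\}$ is also preserved, because $\C_\ig$ and $\C_\kg$ are each stable under all conjugations.

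The only point needing care is the normalization: mutual divisibility gives equality only up to a rational unit, and it is the primitivity and positive leading coefficient convention that removes that factor.
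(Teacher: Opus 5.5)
Your proof is correct, but your main argument takes a different route from the paper's. The paper's proof is essentially your cross-check. It writes the conjugates in idempotent form, $\overline{\omega}^{\ig}=\gamma_2\eg_1+\gamma_1\eg_2$, $\overline{\omega}^{\jg}=\overline{\gamma_1}\eg_1+\overline{\gamma_2}\eg_2$, $\overline{\omega}^{\kg}=\overline{\gamma_2}\eg_1+\overline{\gamma_1}\eg_2$, and reads off the equality from Theorem~\ref{thpolyminimal} together with $P_\gamma=P_{\overline{\gamma}}$.

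You instead use the standard Galois-type argument. Each conjugation $\sigma$ is an injective $\Q$-algebra map, so $P(\sigma(\omega))=\sigma(P(\omega))$ vanishes exactly when $P(\omega)$ does. Hence $\omega$ and $\sigma(\omega)$ have the same vanishing ideal in $\Q[X]$ and the same normalized generator. This is legitimate even though $\B$ is not a domain, because $\Q[X]$ is a PID. Your involution step gives the reverse divisibility; injectivity alone would also suffice.

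Your route is shorter, applies to any ring automorphism of $\B$, and needs nothing about the shape of $P_\omega$. The paper's route additionally shows how the conjugations permute and conjugate the components. However, it depends on Theorem~\ref{thpolyminimal}, whose dichotomy is not accurate as stated. For example, $\omega=\sqrt{2}\,\jg=\sqrt{2}\,\eg_1-\sqrt{2}\,\eg_2$ satisfies $\omega^2=2$, so $P_\omega=X^2-2=P_{\gamma_1}=P_{\gamma_2}$, yet $\omega\notin\C_{\ig}\cup\C_{\kg}$. Your cross-check inherits that membership criterion.

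It is cleaner to phrase the cross-check via \eqref{decompoly}. That identity shows the vanishing ideal of $\omega$ is $(P_{\gamma_1})\cap(P_{\gamma_2})$, so $P_\omega$ is the normalized $\mathrm{lcm}(P_{\gamma_1},P_{\gamma_2})$. This is visibly unchanged by swapping and complex-conjugating the components.
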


\begin{proof}
The conclusion follows from Theorem~\ref{thpolyminimal}, the property $P_\gamma=P_{\bar\gamma}$ in the complex case, and the fact that if $\omega=\gamma_1\eg_1+\gamma_2\eg_2$, then
$$
\overline{\omega}^{\ig}=\gamma_2\eg_1+\gamma_1\eg_2,\qquad
\overline{\omega}^{\jg}=\bar{\gamma_1}\eg_1+\bar{\gamma_2}\eg_2,\qquad
\overline{\omega}^{\kg}=\bar{\gamma_2}\eg_1+\bar{\gamma_1}\eg_2.
$$
\end{proof}

We now describe the complex and bicomplex roots of $P_\omega$ and establish its relation with the product
$$
\prod_{\xi\in\Lambda_\B(\omega)} (X-\xi),
$$
where $\Lambda_\B(\omega)$ denotes the set of all bicomplex roots of $P_\omega$. We denote by $r$ the number of real roots of $P_\omega$, and by $2s$ the number of non-real complex roots.

\begin{theo}\label{theroots}
Let $\omega$ be a bicomplex algebraic number of degree $n$. Then $P_\omega$ has $n$ distinct complex roots and $n^2$ distinct bicomplex roots, with
$$
	n^2=r+2( s_\ig+ s_\jg+ s_\kg)+4d,
$$
where $r$ is the number of real roots, $2s_\ug$ is the number of non-real roots lying in $\C_\ug$ $(\ug=\ig,\jg,\kg)$, and $4d$ is the number of roots lying in none of these three subalgebras. Moreover,
$$
2s_\jg = r(r-1),\qquad
2s_\ig = 2s_\kg = 2s,\qquad
4d = 4s(s+r-1).
$$
\end{theo}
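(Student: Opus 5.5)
The plan is to work entirely in the idempotent representation \eqref{idempdecomp} and use the polynomial decomposition \eqref{decompoly}. First, $P_\omega\in\Z[X]$ is a product of distinct irreducible factors. By Theorem~\ref{thpolyminimal} it is either irreducible, or the product $P_{\gamma_1}P_{\gamma_2}$ of two coprime irreducibles. In either case it is separable over $\Q$, hence has $n$ distinct complex roots $\alpha_1,\dots,\alpha_n$.

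Next, by \eqref{decompoly}, a bicomplex number $\xi=\beta_1\eg_1+\beta_2\eg_2$ satisfies $P_\omega(\xi)=0$ if and only if $P_\omega(\beta_1)=P_\omega(\beta_2)=0$. Since the decomposition \eqref{idempdecomp} is unique, the map $(\beta_1,\beta_2)\mapsto \beta_1\eg_1+\beta_2\eg_2$ is a bijection from $\{\alpha_1,\dots,\alpha_n\}^2$ onto $\Lambda_\B(\omega)$. Hence $P_\omega$ has exactly $n^2$ distinct bicomplex roots.

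The counting step is a classification of the ordered pairs $(\beta_1,\beta_2)$ according to the subalgebra containing $\xi$. From \eqref{mP} one reads off the membership criteria:
\begin{itemize}
\item $\xi\in\R$ iff $\beta_1=\beta_2\in\R$;
\item $\xi\in\C_\ig=\C$ iff $\beta_1=\beta_2$;
\item $\xi\in\C_\kg$ iff $\beta_2=\overline{\beta_1}$;
\item $\xi\in\D=\C_\jg$ iff $\beta_1,\beta_2\in\R$.
\end{itemize}
The last two follow from the conjugation formulas in the Proposition preceding the theorem: $\xi\in\C_\kg$ iff $\overline{\xi}^{\kg}=\xi$, and $\xi\in\D$ iff $\overline{\xi}^{\jg}=\xi$. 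Any two of these subalgebras intersect exactly in $\R$, so the four classes (real, non-real in $\C_\ig$, non-real in $\C_\jg$, non-real in $\C_\kg$) are disjoint, and the remaining roots form the fifth class.

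The counts in each class are then as follows.
\begin{itemize}
\item Real roots correspond to $\beta_1=\beta_2$ real, giving $r$ of them.
\item Non-real roots in $\C_\ig$ correspond to $\beta_1=\beta_2$ non-real, giving $2s$.
\item Non-real roots in $\C_\kg$ need $\beta_2=\overline{\beta_1}\neq\beta_1$. This requires $\beta_1$ non-real, and then $\overline{\beta_1}$ is again a root because $P_\omega$ has real coefficients, giving $2s$.
\item Non-real roots in $\D$ correspond to $\beta_1\neq\beta_2$ both real, giving $r(r-1)$.
\end{itemize}
The remaining roots number $4d=n^2-r-4s-r(r-1)$. Substituting $n=r+2s$ gives $n^2-r^2-4s=4rs+4s^2-4s=4s(s+r-1)$, as claimed. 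This also confirms that the remaining count is divisible by $4$; it is consistent with the remaining roots forming orbits of size four under the conjugation group, though that is not needed for the count.

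The only delicate point is the disjointness and criteria bookkeeping, namely checking via \eqref{mP} that $\C_\kg$ corresponds exactly to $\beta_2=\overline{\beta_1}$ and $\D$ to real pairs. I would verify this directly from $\omega=z_1+\kg z_2$: for instance, $z_1,z_2\in\R$ gives $\P_2=\overline{\P_1}$, and the converse holds by solving for $z_1,z_2$ from $\P_1,\P_2$. Once the criteria are settled, everything else is elementary counting.
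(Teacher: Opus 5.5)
Your proposal is correct and follows the paper's proof essentially step for step. The shared steps are: separability of $P_\omega$ via Theorem~\ref{thpolyminimal}; the identification $\Lambda_\B(\omega)=\Lambda(\omega)\eg_1+\Lambda(\omega)\eg_2$; a disjoint classification of the roots by subalgebra (you read membership off the idempotent components, the paper off which conjugations fix the root, which is equivalent); the direct counts $r$, $2s$, $2s$, $r(r-1)$; and $4d$ by subtraction using $n=r+2s$.

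The separability step is better justified by noting that $P_\omega=\mathrm{lcm}(P_{\gamma_1},P_{\gamma_2})$ is squarefree, rather than by leaning on Theorem~\ref{thpolyminimal}. That theorem's exact case distinction is not right in general: for example, $\omega=\sqrt{2}\,\jg\notin\C_\ig\cup\C_\kg$ has $P_\omega=X^2-2$. Your disjunction ``irreducible or a product of two coprime irreducibles'' nevertheless holds for every $\omega$, so your count is unaffected.
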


\begin{proof}
Let $\omega=\gamma_1 \eg_1+\gamma_2 \eg_2$ be a bicomplex algebraic number of degree $n$. 
According to Theorem~\ref{thpolyminimal}, we have either
$P_\omega=P_{\gamma_1}=P_{\gamma_2}$ or
$P_\omega=P_{\gamma_1}P_{\gamma_2}$, where $P_{\gamma_1}$ and $P_{\gamma_2}$ are relatively prime polynomials, each having no repeated roots. 
This implies that $P_\omega$ has $n$ distinct complex roots.
For bicomplex roots, by \eqref{decompoly}, we have
\[
\Lambda_\B(\omega)=\Lambda(\omega)\eg_1+\Lambda(\omega)\eg_2,
\]
where $\Lambda(\omega)$ denotes the set of complex roots of $P_\omega$.
From this and the above observation, it follows that $P_\omega$ has $n^2$ distinct bicomplex roots.\\
Now observe that $\Lambda_\B(\omega)$ is the disjoint union
\begin{equation}\label{unionrootset}
\Lambda_\B(\omega)=\mathcal{R}\cup \mathcal{S}_\ig\cup \mathcal{S}_\jg\cup \mathcal{S}_\kg \cup \mathcal{D},
\end{equation}
where
\begin{eqnarray*}
\mathcal{R}
&=&
\bigl\{ \psi \in \Lambda_\B(\omega):\, \overline{\psi}^{\ug}=\psi \text{ for all } \ug \in \{\ig,\jg,\kg\} \bigr\}
= \bigcup_{\psi \in \mathcal{R}}\{\psi\},\\[1ex]
\mathcal{S}_\ug
&=&
\bigl\{ \psi \in \Lambda_\B(\omega):\, \overline{\psi}^{\ug}=\psi,\ \overline{\psi}\neq\psi \bigr\}
= \bigcup_{\psi \in \mathcal{S}_\ug}\{\psi,\bar{\psi}\},
\quad \ug=\ig,\jg,\kg,\\[1ex]
\mathcal{D}
&=&
\bigl\{ \psi \in \Lambda_\B(\omega):\, \overline{\psi}^{\ug}\neq\psi \text{ for all } \ug \in \{\ig,\jg,\kg\} \bigr\}
= \bigcup_{\psi \in \mathcal{D}}\{\psi,\overline{\psi}^{\ig},\overline{\psi}^{\jg},\overline{\psi}^{\kg}\}.
\end{eqnarray*}
Recall that a bicomplex number $\psi$ is real (resp.\ $\psi\in\C_\ug$ for some $\ug\in\{\ig,\jg,\kg\}$) if and only if
$\overline{\psi}^{\ug}=\psi$ for all $\ug\in\{\ig,\jg,\kg\}$ (resp.\ for some $\ug\in\{\ig,\jg,\kg\}$).
It follows that $\mathcal{R}$ is the set of real roots with $\#\mathcal{R}=r$,
$\mathcal{S}_\ug$ $(\ug=\ig,\jg,\kg)$ is the set of non-real roots belonging to $\C_\ug$ with $\#\mathcal{S}_\ug=2s_\ug$,
and $\mathcal{D}$ is the set of roots not belonging to $\C_\ug$ for any $\ug=\ig,\jg,\kg$, with $\#\mathcal{D}=4d$.
Therefore, from \eqref{unionrootset}, we obtain
\begin{equation}\label{eqroots}
n^2=r+2(s_\ig+s_\jg+s_\kg)+4d.
\end{equation}
We now evaluate $2s_\jg$, $2s_\ig$, $2s_\kg$, and $4d$ in terms of $(r,s)$.
For the set $\mathcal{S}_\jg$ of non-real hyperbolic roots, we observe that it consists of roots
$\psi=\alpha\eg_1+\beta\eg_2$ with $\alpha$ and $\beta$ distinct real roots of $P_\omega$, hence
$2s_\jg=r(r-1)$.
For the sets $\mathcal{S}_\ig$ and $\mathcal{S}_\kg$, note that $a+\ig b\in\C_\ig=\C$ is a root of $P_\omega$
if and only if $a+\kg b\in\C_\kg$ is a root, which implies that
$2s_\ig=2s_\kg=2s$.
Finally, the expression for $4d$ follows from \eqref{eqroots} together with the relation $n=r+2s$.
\end{proof}

Since $\Lambda(\omega)$ consists of the $n=r+2s$ distinct complex roots of $P_\omega$, we have the factorization
$$
P_\omega(X)=a_n\prod_{\psi\in\Lambda(\omega)}(X-\psi),
$$
where $a_n$ is the leading coefficient of $P_\omega$. The next theorem gives the corresponding formula over the full root set $\Lambda_\B(\omega)$.
We use the identity
$$
\prod_{(i,j)\in\Omega}\!\!\bigl(X-(\mu_{ij}\eg_1+\eta_{ij}\eg_2)\bigr)
=
\Bigl(\prod_{(i,j)\in\Omega}(X-\mu_{ij})\Bigr)\eg_1
+
\Bigl(\prod_{(i,j)\in\Omega}(X-\eta_{ij})\Bigr)\eg_2,
$$
where $\Omega\subset\{1,\dots,p\}\times\{1,\dots,q\}$ and $\mu_{ij},\eta_{ij}\in\C$.

\begin{theo}\label{thefactorization}
Let $\omega$ be a bicomplex algebraic number of degree $n$. Then
$$
P_\omega^{\,n}(X)=a_n^n \prod_{\psi\in\Lambda_\B(\omega)} (X-\psi).
$$
\end{theo}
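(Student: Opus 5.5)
The plan is to compute both sides of the identity in the idempotent basis and compare the $\eg_1$ and $\eg_2$ components separately. As in the proof of Theorem~\ref{theroots}, \eqref{decompoly} gives
$$
\Lambda_\B(\omega)=\Lambda(\omega)\eg_1+\Lambda(\omega)\eg_2
=\{\mu\eg_1+\eta\eg_2:\ \mu,\eta\in\Lambda(\omega)\}.
$$
Since $P_\omega$ has $n$ distinct complex roots (Theorem~\ref{theroots}), the map $(\mu,\eta)\mapsto \mu\eg_1+\eta\eg_2$ is a bijection from $\Lambda(\omega)\times\Lambda(\omega)$ onto $\Lambda_\B(\omega)$. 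Enumerating $\Lambda(\omega)=\{\psi_1,\dots,\psi_n\}$, we index $\Lambda_\B(\omega)$ by $\Omega=\{1,\dots,n\}^2$ with $\mu_{ij}=\psi_i$ and $\eta_{ij}=\psi_j$.

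Next we apply the product identity stated just before the theorem:
$$
\prod_{\psi\in\Lambda_\B(\omega)}(X-\psi)
=\Bigl(\prod_{i,j=1}^{n}(X-\psi_i)\Bigr)\eg_1+\Bigl(\prod_{i,j=1}^{n}(X-\psi_j)\Bigr)\eg_2 .
$$
In the $\eg_1$ component, each factor $X-\psi_i$ appears once for each $j$, that is, $n$ times. That component is therefore $\bigl(\prod_i(X-\psi_i)\bigr)^n=a_n^{-n}P_\omega(X)^n$, and the $\eg_2$ component is the same by symmetry. Multiplying by $a_n^n$ and using $\eg_1+\eg_2=1$ gives
$$
a_n^n\prod_{\psi\in\Lambda_\B(\omega)}(X-\psi)=P_\omega^n(X)\eg_1+P_\omega^n(X)\eg_2=P_\omega^n(X).
$$

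There is no genuine obstacle. The one point needing care is that the product over $\Lambda_\B(\omega)$ counts each bicomplex root exactly once, so the enumeration by ordered pairs must be bijective. This is exactly where the distinctness of the $n$ complex roots from Theorem~\ref{theroots} is used: it ensures that distinct pairs give distinct elements $\mu\eg_1+\eta\eg_2$, by uniqueness of the idempotent decomposition \eqref{idempdecomp}.
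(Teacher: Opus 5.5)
Your argument is correct, but it is organized differently from the paper's proof.

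The paper does not use the product structure $\Lambda_{\B}(\omega)=\{\mu\eg_1+\eta\eg_2:\mu,\eta\in\Lambda(\omega)\}$ in one stroke. Instead it:
\begin{itemize}
\item splits $\Lambda_{\B}(\omega)$ into the classes $\mathcal{R}$, $\mathcal{S}_{\ig}$, $\mathcal{S}_{\jg}$, $\mathcal{S}_{\kg}$ and $\mathcal{D}=\mathcal{D}_1\cup\mathcal{D}_2$ from Theorem~\ref{theroots};
\item writes the complex roots as real roots $\alpha_k$ and conjugate pairs $\beta_l,\bar\beta_l$;
\item computes each partial product separately, for instance $P_{\mathcal{S}_{\jg}}=\prod_k(X-\alpha_k)^{r-1}$, and $P_{\mathcal{D}}$ with exponents $2s$ and $r+2(s-1)$;
\item checks that the exponents add up to $1+(r-1)+2s=n$ for each $\alpha_k$, and to $1+1+r+2(s-1)=n$ for each pair $\beta_l,\bar\beta_l$.
\end{itemize}

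Your observation is that every complex root occurs as the $\eg_1$-coordinate, and as the $\eg_2$-coordinate, of exactly $n$ bicomplex roots. This gives the identity uniformly and buys you two things. First, you need no restriction like the paper's ``without loss of generality $r,s\neq 0,1$''; the paper does not check those degenerate cases, though they work out with empty products. Second, your argument holds verbatim for any polynomial in $\C[X]$ with distinct roots.

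The paper's longer route buys finer information. It isolates the separate contributions of the real, complex, hyperbolic and generic bicomplex roots, each a real polynomial, and matches them with the root count of Theorem~\ref{theroots}.

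One clarification on your last paragraph. Injectivity of $(\mu,\eta)\mapsto\mu\eg_1+\eta\eg_2$ on $\Lambda(\omega)\times\Lambda(\omega)$ needs only uniqueness of the idempotent decomposition. Distinctness of the roots enters elsewhere: it gives $\#\Lambda(\omega)=n$ and the factorization $\prod_{i=1}^{n}(X-\psi_i)=a_n^{-1}P_\omega(X)$, that is, squarefreeness of $P_\omega$.
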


\begin{proof}
Let us consider the polynomials $P_{\mathcal{S}}$ defined for each subset $\mathcal{S}$ of $\Lambda_\B(\omega)$ by
\begin{eqnarray*}
	P_{\mathcal{S}}(X)	= \displaystyle\left\{\begin{array}{rl} \displaystyle 1,\mbox{~~~~~~~~~~~~~~~~~~~~~~~~~~~~~~~~~~~~}& \mbox{if~} \mathcal{S}=\emptyset\\\\
	 \displaystyle  \prod_{\psi \in \mathcal{S}} (X-\psi), \mbox{~~~~~~~~~~~~~~~~~~~~~~~} & \mbox{otherwise~} 		×
	\end{array}\right.	
\end{eqnarray*}
Thus, \eqref{unionrootset} yields
\begin{equation}\label{facprod}
\prod_{\psi\in \Lambda_\B(\omega)}(X-\psi)
= P_{\mathcal{R}}P_{\mathcal{S}_\ig}P_{\mathcal{S}_\jg}P_{\mathcal{S}_\kg}P_{\mathcal{D}}.
\end{equation}\\
We now describe each polynomial $P_{\mathcal{S}}$.
Without loss of generality, we may assume that $r,s\neq 0,1$; that is, by Theorem~\ref{theroots}, each subset $\mathcal{S}$ in the decomposition \eqref{unionrootset} is nonempty.
In this case, the set $\Lambda(\omega)$ of all $n=r+2s$ distinct complex roots of $P_\omega$ is given by
\begin{equation}\label{setlamda}
\Lambda(\omega)=\{\alpha_1,\ldots,\alpha_r,\beta_1,\ldots,\beta_s,\bar{\beta}_1,\ldots,\bar{\beta}_s\},
\end{equation}
where the $\alpha_k$ are real roots and the $\beta_l,\bar{\beta}_l$ are non-real complex roots, for $k=1,\ldots,r$ and $l=1,\ldots,s$.
Therefore, $P_\omega(X)$ admits the factorization
\begin{equation}\label{factoPw}
P_\omega(X)=a_n\prod_{k=1}^r(X-\alpha_k)\prod_{l=1}^s (X-\beta_l)(X-\bar{\beta}_l).
\end{equation}\\
Case $\mathcal{S}=\mathcal{R}$.
We have
\begin{equation}\label{PR}
P_{\mathcal{R}}=\prod_{k=1}^r(X-\alpha_k).
\end{equation}\\
Case $\mathcal{S}=\mathcal{S}_\ig,\mathcal{S}_\kg$.
We have
\begin{equation}\label{PSi}
P_{\mathcal{S}_{\ig}}=P_{\mathcal{S}_{\kg}}
=\prod_{l=1}^s (X-\beta_l)(X-\bar{\beta}_l).
\end{equation}
Indeed, as indicated previously, a complex number $a+\ig b$ is a root of $P_\omega$ if and only if $a+\kg b$ is a root of $P_\omega$ in $\C_\kg$.\\
Case $\mathcal{S}=\mathcal{S}_\jg$.
We have
$$
\mathcal{S}_{\jg}=\{\alpha_i\eg_1+\alpha_j\eg_2:\ 1\leq i,j\leq r,\ i\neq j\}.
$$
Thus,
\begin{eqnarray*}
\prod_{\psi\in \mathcal{S}_\jg}(X-\psi)
&=&
\prod_{\substack{1\leq i,j\leq r \\ i\neq j}} (X-\alpha_i)\eg_1
+
\prod_{\substack{1\leq i,j\leq r \\ i\neq j}} (X-\alpha_j)\eg_2\\
&=&
\prod_{\substack{1\leq i,j\leq r \\ i\neq j}} (X-\alpha_i).
\end{eqnarray*}
Hence,
\begin{equation}\label{PSj}
P_{\mathcal{S}_\jg}
=\prod_{k=1}^r (X-\alpha_k)^{r-1}.
\end{equation}\\
Case $\mathcal{S}=\mathcal{D}$.
We know that $\mathcal{D}$ is closed under conjugation operations and consists of all bicomplex numbers
$\psi=\lambda\eg_1+\gamma\eg_2$ with $\lambda,\gamma\in\Lambda(\omega)$ such that
$\overline{\psi}^{\ug}\neq\psi$ for $\ug=\ig,\jg,\kg$.
From the idempotent representation of $\psi$, we have
$\overline{\psi}^{\ig}=\gamma\eg_1+\lambda\eg_2$,
$\overline{\psi}^{\jg}=\bar{\lambda}\eg_1+\bar{\gamma}\eg_2$,
and
$\overline{\psi}^{\kg}=\bar{\gamma}\eg_1+\bar{\lambda}\eg_2$.
Thus, from \eqref{setlamda}, we obtain that $\mathcal{D}$ is the disjoint union
$$
\mathcal{D}=\mathcal{D}_1\cup \mathcal{D}_2,
$$
where
\begin{eqnarray*}
\mathcal{D}_1
&=&
\{\alpha_i\eg_1+\beta_j\eg_2,\,
\beta_j\eg_1+\alpha_i\eg_2,\,
\alpha_i\eg_1+\bar{\beta}_j\eg_2,\,
\bar{\beta}_j\eg_1+\alpha_i\eg_2:
1\leq i\leq r,\ 1\leq j\leq s\},\\[1ex]
\mathcal{D}_2
&=&
\{\beta_i\eg_1+\beta_j\eg_2,\,
\bar{\beta}_i\eg_1+\bar{\beta}_j\eg_2,\,
\beta_i\eg_1+\bar{\beta}_j\eg_2,\,
\bar{\beta}_j\eg_1+\beta_i\eg_2:
1\leq i,j\leq s,\ i\neq j\}.
\end{eqnarray*}\\
Thus,
\begin{eqnarray*}
\prod_{\psi\in \mathcal{D}_1}(X-\psi)
&=&
\prod_{\substack{1\leq i\leq r \\ 1\leq j\leq s}}
(X-\alpha_i)^2(X-\beta_j)(X-\bar{\beta}_j)\eg_1\\
&&+
\prod_{\substack{1\leq i\leq r \\ 1\leq j\leq s}}
(X-\alpha_i)^2(X-\beta_j)(X-\bar{\beta}_j)\eg_2\\
&=&
\prod_{\substack{1\leq i\leq r \\ 1\leq j\leq s}}
(X-\alpha_i)^2(X-\beta_j)(X-\bar{\beta}_j).
\end{eqnarray*}
Hence,
$$
P_{\mathcal{D}_1}
=\prod_{k=1}^r (X-\alpha_k)^{2s}
\prod_{l=1}^s \bigl((X-\beta_l)(X-\bar{\beta}_l)\bigr)^r.
$$\\
For $P_{\mathcal{D}_2}$, one has
\begin{eqnarray*}
\prod_{\psi\in \mathcal{D}_2}(X-\psi)
&=&
\prod_{\substack{1\leq i,j\leq s \\ i\neq j}}
\bigl((X-\beta_i)(X-\bar{\beta}_i)\bigr)^2\eg_1\\
&&+
\prod_{\substack{1\leq i,j\leq s \\ i\neq j}}
\bigl((X-\beta_j)(X-\bar{\beta}_j)\bigr)^2\eg_2\\
&=&
\prod_{\substack{1\leq i,j\leq s \\ i\neq j}}
\bigl((X-\beta_i)(X-\bar{\beta}_i)\bigr)^2.
\end{eqnarray*}
Hence,
$$
P_{\mathcal{D}_2}
=\prod_{l=1}^s \bigl((X-\beta_l)(X-\bar{\beta}_l)\bigr)^{2(s-1)}.
$$\\
It follows from the above, using $P_{\mathcal{D}}=P_{\mathcal{D}_1}P_{\mathcal{D}_2}$, that
\begin{equation}\label{PD}
P_{\mathcal{D}}
=\prod_{k=1}^r (X-\alpha_k)^{2s}
\prod_{l=1}^s \bigl((X-\beta_l)(X-\bar{\beta}_l)\bigr)^{r+2(s-1)}.
\end{equation}
Finally, since $n=r+2s$, substituting \eqref{PD}, \eqref{PR}, \eqref{PSi}, and \eqref{PSj} into \eqref{facprod} and using \eqref{factoPw}, we obtain
$$
\prod_{\psi\in \Lambda_\B(\omega)}(X-\psi)
= a_n^{-n}P_\omega^n(X).
$$
\end{proof}

\begin{exa}
\rm
Let $\omega = 1+\ig+\jg+\kg$, which does not lie in any $\C_\ug$. In idempotent form,
$$
\omega = 2\eg_1 + 2\ig\eg_2.
$$
By Theorem~\ref{thpolyminimal},
$$
P_\omega(X)=(X-2)(X^2+4)=X^3-2X^2+4X-8.
$$
The complex root set is $\Lambda(\omega)=\{2,-2\ig,2\ig\}$, and the $3^2$ bicomplex roots are
$$
\Lambda_\B(\omega)=\{\alpha\eg_1+\beta\eg_2:\alpha,\beta\in\Lambda(\omega)\}.
$$
By Theorem~\ref{thefactorization},
$$
\prod_{\psi\in\Lambda_\B(\omega)}(X-\psi)
=
P_\omega^{\,3}(X).
$$
\end{exa}

\begin{exa}
\rm
Let $n\geq 2$ and set $\zeta_n = e^{\frac{2\ig\pi}{n}}$. This is a complex algebraic integer whose minimal polynomial is the $n$th cyclotomic polynomial $\Phi_n \in \Z[X]$ of degree $\varphi(n)$, given by
$$
\Phi_n(X) = \prod_{\substack{1 \leq m \leq n \\ (m,n)=1}} (X - \zeta_n^{m}).
$$
Since 
$$
\Lambda(\zeta_n)=\{\zeta_n^{m} : 1 \leq m \leq n,\ (m,n)=1\}
$$
is the set of the $\varphi(n)$ distinct complex roots of $\Phi_n$, the $\varphi(n)^2$ distinct bicomplex roots of $\Phi_n$ are precisely
$$
\Lambda_\B(\zeta_n)
=
\{\zeta_n^{m}\eg_1 + \zeta_n^{m'}\eg_2 :
1 \leq m,m' \leq n,\ (m,n)=(m',n)=1\}.
$$
Therefore, by Theorem~\ref{thefactorization}, we obtain
$$
\prod_{\psi \in \Lambda_\B(\zeta_n)} (X - \psi)
=
\Phi_n^{\varphi(n)}(X) \in \Z[X].
$$
\end{exa}

\section{Finite bicomplex extensions of $\Q$}

In this section, we consider finite bicomplex extensions $L$ of $\Q$; that is, finite unital $\Q$-algebras $L \subset \B$. Our aim is to study their rings of integers $\mathcal{O}_L$. Recall that $\mathcal{O}_L$ consists of all elements of $L$ (hence all bicomplex algebraic numbers) which satisfy monic polynomials over $\Z$. For basic concepts from algebraic number theory and abstract algebra, we refer the reader to \cite{Murty,Milne,L}.

The next theorem plays a central role in understanding the basic properties of $\mathcal{O}_L$.

\begin{theo} \label{decompalg}
Let $L$ be a finite bicomplex extension of $\Q$. Then $L = \Q[\omega]$ for some $\omega \in L$. Moreover, $L$ is a field extension of $\Q$ if and only if $\omega \in \C_\ug$ for some $\ug \in \{\ig, \kg\}$. Otherwise,
$$
L = K_1 \eg_1 + K_2 \eg_2,
$$
where $K_1$ and $K_2$ are number fields.
\end{theo}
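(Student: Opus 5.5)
The plan is to push everything through the two projections $\P_1,\P_2$ and the idempotent structure of $\B$. Then the primitive element statement reduces to the classical one together with the Chinese remainder theorem. The field criterion will be read off from Theorem~\ref{thpolyminimal}.

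\textbf{Step 1: structure of $L$.} Since $L$ is a finite $\Q$-algebra inside $\B$, each of its elements is algebraic, so $L\subset\mathcal{A}_\B$ by Theorem~\ref{bicomalgebr}. Set $K_1:=\P_1(L)$ and $K_2:=\P_2(L)$. These are finite-dimensional $\Q$-subalgebras of $\C$, hence number fields, and
$$L\subset K_1\eg_1+K_2\eg_2 .$$
Moreover, $\B$ has no nonzero nilpotents, because the operations are componentwise in the idempotent basis. So $L$ is a reduced finite-dimensional commutative $\Q$-algebra, and therefore a finite product of fields. The only idempotents of $\B$ are $0,1,\eg_1,\eg_2$, so $L$ has at most two field factors. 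This gives two cases.

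\textbf{Case (a): $\eg_1\notin L$.} Then $L$ has no nontrivial idempotents, so $L$ is a field. The map $\P_1|_L$ is then an injective ring map $L\to K_1$, and $L\simeq K_1$. The classical primitive element theorem gives $L=\Q[\omega]$ for some $\omega\in L$.

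\textbf{Case (b): $\eg_1\in L$.} For $x\in L$ we have $x\eg_1=\P_1(x)\eg_1\in L$ and $x\eg_2\in L$, so
$$L=K_1\eg_1+K_2\eg_2 .$$
Choose primitive elements $\theta_1,\theta_2$ with $K_m=\Q(\theta_m)$. Replacing $\theta_2$ by $\theta_2+c$ for a suitable $c\in\Z$, we may assume that $\theta_1$ and $\theta_2$ are not conjugate, i.e.\ $P_{\theta_1}\neq P_{\theta_2}$. Put $\omega=\theta_1\eg_1+\theta_2\eg_2\in L$. By \eqref{decompoly}, $(P_{\theta_1}P_{\theta_2})(\omega)=0$. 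The polynomials $P_{\theta_1},P_{\theta_2}$ are distinct irreducibles, hence coprime. The Chinese remainder theorem then gives
$$\Q[\omega]\simeq \Q[X]/(P_{\theta_1}P_{\theta_2})\simeq K_1\times K_2,$$
so $\dim_\Q \Q[\omega]=[K_1:\Q]+[K_2:\Q]=\dim_\Q L$. Since $\Q[\omega]\subset L$, this forces $L=\Q[\omega]$. In particular $L$ is not a field in this case, since $\eg_1\eg_2=0$.

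\textbf{Step 2: the field criterion.} We now have $L=\Q[\omega]\simeq\Q[X]/(P_\omega)$, so $L$ is a field exactly when $P_\omega$ is irreducible. If $\omega\in\C_\ug$ for some $\ug\in\{\ig,\kg\}$, Theorem~\ref{thpolyminimal} gives $P_\omega=P_{\gamma_1}$, which is irreducible, so $L$ is a field. If $\omega\notin\C_\ig\cup\C_\kg$, the same theorem gives $P_\omega=P_{\gamma_1}P_{\gamma_2}$, which is reducible, so $L$ is not a field. Then $L$ falls under case (b), and $L=K_1\eg_1+K_2\eg_2$ with $K_1,K_2$ number fields.

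\textbf{Main obstacle.} The delicate point is Step 1: showing that $L$ is either a field or contains $\eg_1$. This comes down to the reducedness of $L$ and the short list of idempotents in $\B$. I would verify this carefully, rather than merely asserting $L=\P_1(L)\eg_1+\P_2(L)\eg_2$, which is false in case (a); for example, $\Q(\ig)$ embedded diagonally is not of that form. The field criterion itself is then immediate from Theorem~\ref{thpolyminimal}, which I take as given in the form stated.
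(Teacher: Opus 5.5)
Your Step~1 is correct, and it is more careful than the paper's proof. The paper asserts without justification that a subfield of $\B$ must lie in $\C_\ig$ or $\C_\kg$, and that otherwise $L=\P_1(L)\eg_1+\P_2(L)\eg_2$. It then only requires $\gamma_2\neq\gamma_1,\bar\gamma_1$, which does not make $P_{\gamma_1}$ and $P_{\gamma_2}$ coprime. Your argument via reducedness and the four idempotents $0,1,\eg_1,\eg_2$ correctly shows two things: $L$ is a field exactly when $\eg_1\notin L$, and $L=K_1\eg_1+K_2\eg_2$ when $\eg_1\in L$. Your condition $P_{\theta_1}\neq P_{\theta_2}$ is exactly what the CRT step needs. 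So you have proved the primitive element claim, and that $L=K_1\eg_1+K_2\eg_2$ whenever $L$ is not a field.

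The gap is Step~2, which imports the dichotomy of Theorem~\ref{thpolyminimal}. That dichotomy is false, and your own case (a) refutes it. Take $\omega=\sqrt2\,\jg=\sqrt2\,\eg_1-\sqrt2\,\eg_2$. Then $L=\Q[\omega]=\Q+\Q\sqrt2\,\jg$ does not contain $\eg_1$, and it is a field isomorphic to $\Q(\sqrt2)$. Its minimal polynomial is $P_\omega=X^2-2=P_{\gamma_1}=P_{\gamma_2}$, not $(X^2-2)^2$. Yet every element of $L\setminus\Q$ has nonzero $\jg$-coordinate, so no generator of $L$ lies in $\C_\ig\cup\C_\kg$. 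Hence the implication ``$L$ is a field $\Rightarrow$ $\omega\in\C_\ug$ for some $\ug\in\{\ig,\kg\}$'' fails for every choice of $\omega$. The ``otherwise'' clause also cannot be keyed to $\omega\notin\C_\ig\cup\C_\kg$: here $\P_1(L)\eg_1+\P_2(L)\eg_2$ is $4$-dimensional while $L$ is $2$-dimensional. No argument can close this step, and the paper's proof rests on the same false claim.

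What your Step~1 actually establishes is the corrected criterion. Write a generator as $\omega=\gamma_1\eg_1+\gamma_2\eg_2$. Then $L$ is a field if and only if $\eg_1\notin L$, if and only if $P_{\gamma_1}=P_{\gamma_2}$, i.e.\ $\gamma_1$ and $\gamma_2$ are conjugate over $\Q$. To see this, note that if $P_{\gamma_1}=P_{\gamma_2}=P$ then $P(\omega)=0$ and $\Q[\omega]\simeq\Q[X]/(P)$; otherwise your CRT computation gives $\Q[\omega]\simeq K_1\times K_2$. The cases $\omega\in\C_\ig$ ($\gamma_2=\gamma_1$) and $\omega\in\C_\kg$ ($\gamma_2=\bar\gamma_1$) are just two instances of this. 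Only the direction ``$\omega\in\C_\ig\cup\C_\kg\Rightarrow L$ is a field'' of your Step~2 survives.
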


\begin{proof}
Let $L$ be a finite bicomplex extension of $\Q$. Since $L$ is bicomplex algebraic, it follows that $L$ is a field extension if and only if there exist $\ug \in \{\ig, \kg\}$ and a field $K \subset \C_\ug$ such that $L = K$, that is, $L = \Q[\omega]$ for some $\omega \in \C_\ug$. Otherwise, $L$ decomposes uniquely as
$$
L = K_1 \eg_1 + K_2 \eg_2,
$$
where $K_k = \mathcal{P}_k(L) \subset \C$ for $k = 1,2$. Since $L$ is a bicomplex algebraic extension—i.e., a subring of $\mathcal{A}_\B$—Theorem~\ref{bicomalgebr} implies that $K_1$ and $K_2$ are subrings of $\hat{\Q}$, and hence number fields.
Because $L$ is not contained in $\C_\ug$ for $\ug \in \{\ig, \kg\}$, we may choose $\gamma_1 \in K_1$ and $\gamma_2 \in K_2$ with $\gamma_2 \neq \gamma_1$ and $\gamma_1 \neq \bar{\gamma}_2$, such that $K_1 = \Q(\gamma_1)$ and $K_2 = \Q(\gamma_2)$. Setting
$$
\omega = \gamma_1 \eg_1 + \gamma_2 \eg_2,
$$
we have $\omega \notin \C_\ug$ for $\ug \in \{\ig, \kg\}$. By Theorem~\ref{thpolyminimal}, the minimal polynomial of $\omega$ satisfies
$$
P_\omega = P_{\gamma_1} P_{\gamma_2},
$$
where $P_{\gamma_1}$ and $P_{\gamma_2}$ are irreducible and relatively prime. By the Chinese Remainder Theorem we obtain
$$
\Q[\omega] \simeq \Q[X]/(P_{\gamma_1}) \times \Q[X]/(P_{\gamma_2}) \simeq K_1 \times K_2.
$$
Since $L \simeq K_1 \times K_2$ via the ring isomorphism
$
\psi \longmapsto (\mathcal{P}_1(\psi), \mathcal{P}_2(\psi)),
$
it follows that
$$
\Q[\omega] \simeq L.
$$
It remains to prove that this isomorphism is in fact an equality. It suffices to show that $\Q[\omega] \subset L$. Let $\psi \in \Q[\omega]$. Then $\psi = P(\omega)$ for some polynomial $P \in \Q[X]$. Using~\eqref{decompoly}, we have
$$
P(\omega) = P(\gamma_1)\eg_1 + P(\gamma_2)\eg_2,
$$
with $P(\gamma_1) \in K_1$ and $P(\gamma_2) \in K_2$, and therefore $\psi \in K_1 \eg_1 + K_2 \eg_2 = L$, as required.\\
Thus every finite bicomplex extension of $\Q$ is generated by an element $\omega = \gamma_1 \eg_1 + \gamma_2 \eg_2 \in L$, and $L$ is a field if and only if $\omega \in \C_\ug$ for some $\ug \in \{\ig, \kg\}$. Otherwise,
$$
L = K_1 \eg_1 + K_2 \eg_2,
$$
where $K_k = \Q(\gamma_k)$ for $k = 1,2$.
\end{proof}

As shown in Theorem~\ref{decompalg}, a bicomplex extension generated by $\omega \in \C_\ug$ with $\ug = \ig$ or $\ug = \kg$ is simply a number field. However, throughout the remainder of this section we shall be interested in bicomplex extensions $L$ of degree $n$ generated by elements $\omega \notin \C_\ug$ for $\ug \in \{\ig, \kg\}$; that is,
$$
L = K_1 \eg_1 + K_2 \eg_2,
$$
where $K_1$ and $K_2$ are number fields.


\subsection{Rings of integers in bicomplex extensions}

\begin{theo}\label{decompintgers}
The ring of integers $\mathcal{O}_L$ of $L$ decomposes uniquely as
$$
\mathcal{O}_L = \mathcal{O}_{K_1}\eg_1 + \mathcal{O}_{K_2}\eg_2.
$$
Moreover, $\mathcal{O}_L$ is a free $\Z$-module of rank $n$ and contains $\Z$.
\end{theo}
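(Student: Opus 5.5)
The plan is to prove the two inclusions of the decomposition separately using \eqref{decompoly}, and then to transport the $\Z$-module structure through the ring isomorphism $\psi\mapsto(\mathcal{P}_1(\psi),\mathcal{P}_2(\psi))$ of Theorem~\ref{decompalg}. Throughout, $L=K_1\eg_1+K_2\eg_2$ as in Theorem~\ref{decompalg}, with $K_k=\mathcal{P}_k(L)$. Uniqueness of the decomposition is automatic, since the idempotent representation \eqref{idempdecomp} of a bicomplex number is unique.

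First inclusion, $\mathcal{O}_L\subset\mathcal{O}_{K_1}\eg_1+\mathcal{O}_{K_2}\eg_2$. Take $\psi=\gamma_1\eg_1+\gamma_2\eg_2\in\mathcal{O}_L$, so $\gamma_k\in K_k$. There is a monic $P\in\Z[X]$ with $P(\psi)=0$. By \eqref{decompoly} this gives $P(\gamma_1)\eg_1+P(\gamma_2)\eg_2=0$, so $P(\gamma_1)=P(\gamma_2)=0$. Hence each $\gamma_k$ is an algebraic integer lying in $K_k$, that is, $\gamma_k\in\mathcal{O}_{K_k}$.

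Reverse inclusion. Take $\gamma_k\in\mathcal{O}_{K_k}$ with monic $P_k\in\Z[X]$ satisfying $P_k(\gamma_k)=0$. Then $P=P_1P_2$ is monic in $\Z[X]$. By \eqref{decompoly}, $P(\gamma_1\eg_1+\gamma_2\eg_2)=P(\gamma_1)\eg_1+P(\gamma_2)\eg_2=0$. So $\gamma_1\eg_1+\gamma_2\eg_2$ is integral, and it lies in $L$, which gives the equality. Since $1=\eg_1+\eg_2$ and $\Z\subset\mathcal{O}_{K_k}$, we get $m=m\eg_1+m\eg_2\in\mathcal{O}_L$ for every $m\in\Z$, so $\Z\subset\mathcal{O}_L$.

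Rank. The map $\psi\mapsto(\mathcal{P}_1(\psi),\mathcal{P}_2(\psi))$ restricts to an isomorphism of additive groups $\mathcal{O}_L\cong\mathcal{O}_{K_1}\times\mathcal{O}_{K_2}$. Classically each $\mathcal{O}_{K_k}$ is a free $\Z$-module of rank $[K_k:\Q]$. Concretely, if $\{a_i\}$ and $\{b_j\}$ are integral bases of $\mathcal{O}_{K_1}$ and $\mathcal{O}_{K_2}$, then $\{a_i\eg_1\}\cup\{b_j\eg_2\}$ is a $\Z$-basis of $\mathcal{O}_L$. So $\mathcal{O}_L$ is free of rank $[K_1:\Q]+[K_2:\Q]$.

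The main obstacle is identifying this rank with $n$, i.e. showing $n=\dim_\Q L=[K_1:\Q]+[K_2:\Q]$. Here $n$ should be read as $\dim_\Q L$, which is also $\deg P_\omega$ for the generator $\omega$. I would take the $\omega=\gamma_1\eg_1+\gamma_2\eg_2$ from the proof of Theorem~\ref{decompalg}, with $K_k=\Q(\gamma_k)$. Theorem~\ref{thpolyminimal} gives $P_\omega=P_{\gamma_1}P_{\gamma_2}$, so $\deg P_\omega=[K_1:\Q]+[K_2:\Q]$. Equivalently, the isomorphism $L\cong K_1\times K_2$ gives $\dim_\Q L=\dim_\Q K_1+\dim_\Q K_2$ directly, which makes the count match.
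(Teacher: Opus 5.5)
Your proof is correct. It reaches the key fact, that $\gamma_1\eg_1+\gamma_2\eg_2\in L$ is integral exactly when each $\gamma_k$ is, by a different mechanism than the paper.

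The paper works through minimal polynomials. It cites Theorem~\ref{thpolyminimal} to get $P_\psi=P_{\alpha_1}$ or $P_\psi=P_{\alpha_1}P_{\alpha_2}$. It then uses that an algebraic number is integral iff its primitive minimal polynomial is monic (Gauss's lemma), and that a product of integer polynomials with positive leading coefficients is monic iff each factor is.

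You instead apply \eqref{decompoly} to an arbitrary monic annihilating polynomial in one direction, and multiply two monic annihilators in the other. So you need neither Gauss's lemma nor Theorem~\ref{thpolyminimal}. This is more elementary. The independence from Theorem~\ref{thpolyminimal} is also worth having, because its case distinction is not accurate as stated. For example, $\omega=\sqrt2\,\jg=\sqrt2\,\eg_1-\sqrt2\,\eg_2$ lies in neither $\C_{\ig}$ nor $\C_{\kg}$, yet $P_\omega=X^2-2=P_{\gamma_1}=P_{\gamma_2}$.

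The paper's proof of this particular theorem survives anyway. It only uses that $P_\psi$ is one of $P_{\alpha_1}$ and $P_{\alpha_1}P_{\alpha_2}$, which is true, since $P_\psi$ is their least common multiple. In return, the paper's route gives the slightly finer statement that $P_\psi$ itself is monic exactly when both $P_{\alpha_k}$ are.

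For the rank, both arguments concatenate integral bases of $\mathcal{O}_{K_1}$ and $\mathcal{O}_{K_2}$. The paper leaves the identity $[K_1:\Q]+[K_2:\Q]=n$ implicit, and you are right to address it. Keep your second justification: $(a,b)\mapsto a\eg_1+b\eg_2$ is a $\Q$-linear bijection $K_1\oplus K_2\to L$ by uniqueness of the idempotent representation.

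Drop your first justification, which goes through the generator built in the proof of Theorem~\ref{decompalg}. It inherits the flaw above: the conditions $\gamma_1\neq\gamma_2$ and $\gamma_1\neq\bar\gamma_2$ do not force $P_{\gamma_1}\neq P_{\gamma_2}$. Take $K_1=K_2=\Q(\sqrt2)$ and $\gamma_1=-\gamma_2=\sqrt2$; then $\deg P_\omega=[K_1:\Q]<n$ and $\Q[\omega]\neq L$.
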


\begin{proof}
Let $\psi = \alpha_1 \eg_1 + \alpha_2 \eg_2 \in L$, where $\alpha_k \in K_k$ for $k = 1,2$.  
By Theorem~\ref{thpolyminimal}, either $P_\psi = P_{\alpha_1} = P_{\alpha_2}$ or
$
P_\psi = P_{\alpha_1} P_{\alpha_2}.
$
Since the leading coefficient of a minimal polynomial is assumed to be a positive integer, it follows that $P_\psi$ is monic if and only if both $P_{\alpha_1}$ and $P_{\alpha_2}$ are monic.  
Thus,
$$
\psi \in \mathcal{O}_L \quad \Longleftrightarrow \quad \alpha_k \in \mathcal{O}_{K_k} \text{ for } k = 1,2,
$$
and therefore
$$
\mathcal{O}_L = \mathcal{O}_{K_1}\eg_1 + \mathcal{O}_{K_2}\eg_2.
$$
To prove that $\mathcal{O}_L$ is a free $\Z$-module of rank $n$, choose
$
\{\eg_1 f_1, \ldots, \eg_1 f_r,\, \eg_2 g_1, \ldots, \eg_2 g_s\}
$
as a $\Z$-basis of $\mathcal{O}_L$, where  
$\{f_1,\ldots,f_r\}$ and $\{g_1,\ldots,g_s\}$ are $\Z$-bases of $\mathcal{O}_{K_1}$ and $\mathcal{O}_{K_2}$, respectively.  
Finally, since any integer $m \in \Z$ can be written as
$
m = m\eg_1 + m\eg_2,
$
we conclude that $\mathcal{O}_L$ contains $\Z$.
\end{proof}
\subsection{Discriminant of the bicomplex extension $L/\Q$} 

Recall that if $R$ is a ring containing $\Z$ and free as a $\Z$-module of rank $n$, then  
\begin{equation}\label{discrdefi}
D(\beta_1, \ldots, \beta_n):=\det\!\left( \mathrm{Tr}_{R/\Z}( \beta_i \beta_j ) \right),
\end{equation}
is a well–defined integer, independent of the chosen $\Z$-basis 
$\{\beta_1,\ldots,\beta_n\}$. It is denoted by $\mathrm{disc}(R/\Z)$ and is called the \emph{discriminant} of $R$ over $\Z$.
When $K$ is a number field of degree $n$, its ring of integers $\mathcal{O}_K$ is a free $\Z$-module of rank $n$, and  
$\mathrm{disc}(\mathcal{O}_K/\Z)$ is usually referred to as the discriminant of the extension $K/\Q$.

For the bicomplex extensions $L = K_1 \eg_1 + K_2 \eg_2$ of degree $n$, Theorem~\ref{decompintgers} shows that 
$\mathcal{O}_L$ is a free $\Z$-module of rank $n$ containing $\Z$. Hence 
$\mathrm{disc}(\mathcal{O}_L/\Z)$ is well defined, and for any choice of $\Z$-basis 
$\{\beta_1,\ldots,\beta_n\}$ of $\mathcal{O}_L$ it may be computed from \eqref{discrdefi}.  
The following proposition provides a convenient formula.

\begin{prop}\label{disriminant}
The discriminant of $L$ is given by
$$
\mathrm{disc}(\mathcal{O}_L/\Z)
   = \mathrm{disc}(\mathcal{O}_{K_1}/\Z)\;
     \mathrm{disc}(\mathcal{O}_{K_2}/\Z).
$$
\end{prop}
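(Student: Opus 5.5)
We will compute the discriminant using the $\Z$-basis of $\mathcal{O}_L$ from Theorem~\ref{decompintgers}, namely
$$
\{\eg_1 f_1,\ldots,\eg_1 f_r,\ \eg_2 g_1,\ldots,\eg_2 g_s\},
$$
where $\{f_i\}$ is a $\Z$-basis of $\mathcal{O}_{K_1}$, $\{g_j\}$ is a $\Z$-basis of $\mathcal{O}_{K_2}$, and $r+s=n$. Since $\mathrm{disc}(\mathcal{O}_L/\Z)$ does not depend on the choice of basis, this basis suffices. The argument has three steps: identify the trace form on $\mathcal{O}_L$, observe that the Gram matrix is block diagonal, and take its determinant.

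\emph{Step 1: the trace.} Here $\mathrm{Tr}_{\mathcal{O}_L/\Z}(\psi)$ is the trace of multiplication by $\psi$ on the free $\Z$-module $\mathcal{O}_L$. Take $\psi=\alpha_1\eg_1+\alpha_2\eg_2$. Multiplication is componentwise, so multiplication by $\psi$ preserves both summands $\mathcal{O}_{K_1}\eg_1$ and $\mathcal{O}_{K_2}\eg_2$. On the first summand it acts as multiplication by $\alpha_1$ on $\mathcal{O}_{K_1}$, and on the second as multiplication by $\alpha_2$ on $\mathcal{O}_{K_2}$. In the chosen basis its matrix is therefore block diagonal, and
$$
\mathrm{Tr}_{\mathcal{O}_L/\Z}(\psi)=\mathrm{Tr}_{K_1/\Q}(\alpha_1)+\mathrm{Tr}_{K_2/\Q}(\alpha_2).
$$
This uses the standard fact that the trace of multiplication on $\mathcal{O}_K$ equals $\mathrm{Tr}_{K/\Q}$.

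\emph{Step 2: the Gram matrix.} We evaluate the traces of products of basis elements using $\eg_1\eg_2=0$ and $\eg_k^2=\eg_k$.
\begin{itemize}
\item $\mathrm{Tr}(\eg_1 f_i\cdot\eg_1 f_{i'})=\mathrm{Tr}_{K_1/\Q}(f_if_{i'})$, since the $\eg_2$-component of $f_if_{i'}\eg_1$ is $0$.
\item $\mathrm{Tr}(\eg_2 g_j\cdot \eg_2 g_{j'})=\mathrm{Tr}_{K_2/\Q}(g_jg_{j'})$, for the same reason.
\item $\mathrm{Tr}(\eg_1f_i\cdot\eg_2g_j)=\mathrm{Tr}(0)=0$.
\end{itemize}
Hence the matrix $\bigl(\mathrm{Tr}(\beta_k\beta_l)\bigr)$ is block diagonal. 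Its two diagonal blocks are exactly the trace matrices defining $\mathrm{disc}(\mathcal{O}_{K_1}/\Z)$ and $\mathrm{disc}(\mathcal{O}_{K_2}/\Z)$.

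\emph{Step 3: the determinant.} The determinant of a block diagonal matrix is the product of the determinants of its blocks, which gives
$$
\mathrm{disc}(\mathcal{O}_L/\Z)=\mathrm{disc}(\mathcal{O}_{K_1}/\Z)\,\mathrm{disc}(\mathcal{O}_{K_2}/\Z).
$$

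The main point needing care is Step~1. The trace in \eqref{discrdefi} must be the trace of $\mathcal{O}_L$ as a $\Z$-algebra, that is, the regular-representation trace. It must not be the four-term expression $\psi+\overline{\psi}^{\ig}+\overline{\psi}^{\jg}+\overline{\psi}^{\kg}$ coming from the bicomplex conjugations, which would be the wrong notion here. Once the regular-representation trace is fixed, the splitting $\mathcal{O}_L=\mathcal{O}_{K_1}\eg_1\oplus\mathcal{O}_{K_2}\eg_2$ into multiplication-stable summands makes the remaining steps routine.
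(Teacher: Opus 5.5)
Your proof is correct and takes essentially the paper's route. The paper uses the ring isomorphism $\mathcal{O}_L\simeq\mathcal{O}_{K_1}\times\mathcal{O}_{K_2}$ and cites Milne's Lemma~3.37 (the discriminant of a product of free $\Z$-algebras is the product of the discriminants); your block-diagonal trace-form computation on the concatenated basis is exactly the proof of that lemma, written out. Your caution that the trace must be the regular-representation trace, not the four-conjugate sum, is well placed: on $\Q[\jg]$ the four-conjugate sum is twice the regular trace.
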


\begin{proof}
By Theorem~\ref{decompintgers}, the ring $\mathcal{O}_L$ is a free $\Z$-module and we have the isomorphism
\begin{equation}\label{ringintergiso}
\mathcal{O}_L \simeq \mathcal{O}_{K_1} \times \mathcal{O}_{K_2},
\end{equation}
via the map $\varphi(\gamma_1 \eg_1 + \gamma_2 \eg_2) = (\gamma_1, \gamma_2)$.  
The claimed identity then follows from Lemma~3.37 in \cite[Chapter~3]{Milne}, applied to the pair 
$(\mathcal{O}_{K_1}, \mathcal{O}_{K_2})$ of free $\Z$-modules.
\end{proof}
      \subsection{Units in $\mathcal{O}_L$}
      
      We now describe the unit group of $\mathcal{O}_L$ and characterize the cases in which it is finite.
      
      \begin{prop}\label{unitgroup}
      The unit group of $\mathcal{O}_L$ decomposes as
     $$
      \mathcal{O}_L^{\times}
         = \mathcal{O}_{K_1}^{\times}\eg_1 \;+\; \mathcal{O}_{K_2}^{\times}\eg_2 .
     $$
      Moreover, $\mathcal{O}_L^{\times}$ is finite if and only if $L$ belongs to one of the following three classes of bicomplex extensions:
      \begin{itemize}
          \item[$(C_1)$] the quadratic hyperbolic extension  
          $$
          L=\Q\eg_1 + \Q\eg_2 = \Q[\jg];
          $$
          \item[$(C_2)$] cubic extensions of the form  
         $$
          L = \Q(\ig\sqrt{d})\,\eg_1 + \Q\,\eg_2 
          \;\backsimeq\; \Q\,\eg_1 + \Q(\ig\sqrt{d})\,\eg_2;
          $$
          \item[$(C_3)$] quadratic extensions  
          $$
          L=\Q(\ig\sqrt{d})\,\eg_1 + \Q(\ig\sqrt{d'})\,\eg_2,
          $$
      \end{itemize}
      where $d,d'$ are positive squarefree integers.
      \end{prop}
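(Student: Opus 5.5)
The proof splits into two parts: the decomposition of the unit group, then the finiteness criterion.

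For the decomposition I will use Theorem~\ref{decompintgers}, which identifies $\mathcal{O}_L$ with $\mathcal{O}_{K_1}\eg_1+\mathcal{O}_{K_2}\eg_2$, together with the ring isomorphism $\varphi:\mathcal{O}_L\to\mathcal{O}_{K_1}\times\mathcal{O}_{K_2}$ from the proof of Proposition~\ref{disriminant}. Multiplication in the idempotent basis is componentwise. Hence $\psi=\alpha_1\eg_1+\alpha_2\eg_2$ has an inverse in $\mathcal{O}_L$ if and only if there are $\beta_k\in\mathcal{O}_{K_k}$ with $\alpha_k\beta_k=1$ for $k=1,2$. The inverse, if it exists, is $\alpha_1^{-1}\eg_1+\alpha_2^{-1}\eg_2$, and it lies in $\mathcal{O}_L$ exactly when each $\alpha_k^{-1}\in\mathcal{O}_{K_k}$. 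This gives $\mathcal{O}_L^\times=\mathcal{O}_{K_1}^\times\eg_1+\mathcal{O}_{K_2}^\times\eg_2$, i.e.\ $\mathcal{O}_L^\times\simeq\mathcal{O}_{K_1}^\times\times\mathcal{O}_{K_2}^\times$ as groups.

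For finiteness, the product $\mathcal{O}_{K_1}^\times\times\mathcal{O}_{K_2}^\times$ is finite if and only if both factors are finite. I will invoke Dirichlet's unit theorem: $\mathcal{O}_K^\times\simeq\mu_K\times\Z^{r_1+r_2-1}$, where $\mu_K$ is the finite group of roots of unity in $K$. So $\mathcal{O}_K^\times$ is finite if and only if $r_1+r_2=1$. This leaves exactly two possibilities: $(r_1,r_2)=(1,0)$, i.e.\ $K=\Q$, or $(r_1,r_2)=(0,1)$, i.e.\ $K$ is imaginary quadratic. In the second case, inside $\C$ we have $K=\Q(\ig\sqrt{d})$ with $d>0$ squarefree. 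Taking $K_1,K_2$ from this list gives four combinations. $\Q\eg_1+\Q\eg_2=\Q[\jg]$ is $(C_1)$, of degree $2$. The two mixed cases form $(C_2)$, of degree $3$; they are identified up to the swap $\eg_1\leftrightarrow\eg_2$, which is the conjugation $\overline{\,\cdot\,}^{\ig}$. The case where both fields are imaginary quadratic is $(C_3)$. The converse direction of the criterion is immediate, since in each class both factors are finite.

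The step I expect to need care is consistency with the standing assumption of the section, that $L$ is not a field, i.e.\ not generated by an element of $\C_\ig$ or $\C_\kg$. In $(C_3)$, if $K_1=K_2$ as sets, then $L=K\eg_1+K\eg_2$ still has degree $4$, not $2$. So I would check that $(C_3)$ is meant as the case where both components are imaginary quadratic. I would also note that the adjective ``quadratic'' there refers to the component fields and not to $L$: $[L:\Q]=4$ by Theorem~\ref{decompintgers}, since the rank is $\deg K_1+\deg K_2$. Apart from this bookkeeping, everything reduces to Dirichlet's theorem applied to each component.
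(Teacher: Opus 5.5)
Your proposal is correct and follows the paper's proof. The componentwise-inverse argument via Theorem~\ref{decompintgers} gives the unit decomposition, and finiteness then reduces to the classical criterion for $\mathcal{O}_K^\times$, which you derive from Dirichlet's unit theorem where the paper simply cites it. Your bookkeeping remark on $(C_3)$ is also right: by the rank count in Theorem~\ref{decompintgers} such an $L$ has degree $4$, as in the paper's own quartic example $\mathcal{Q}_{\B}$, so ``quadratic'' there can only refer to the component fields.
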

      
      \begin{proof}
      From the ring isomorphism \eqref{ringintergiso}, we have
      $$
      \mathcal{O}_L^{\times}
         \backsimeq \mathcal{O}_{K_1}^{\times} \times \mathcal{O}_{K_2}^{\times}
         \backsimeq \mathcal{O}_{K_1}^{\times}\eg_1 + \mathcal{O}_{K_2}^{\times}\eg_2.
      $$
      Thus, to obtain the stated decomposition of $\mathcal{O}_L^{\times}$, it suffices to show the inclusion  
      $$
      \mathcal{O}_L^{\times} 
         \subset \mathcal{O}_{K_1}^{\times}\eg_1 + \mathcal{O}_{K_2}^{\times}\eg_2.
     $$
      Indeed, let $\psi=\alpha_1\eg_1 + \alpha_2\eg_2 \in \mathcal{O}_L$ be a unit. Then there exists 
      $\omega = \beta_1\eg_1 + \beta_2\eg_2 \in \mathcal{O}_L$ such that 
      $\psi\omega = 1$, which implies
      $
      \alpha_1\beta_1 = \alpha_2\beta_2 = 1.
      $
      By Theorem~\ref{decompintgers}, we have $\alpha_k, \beta_k \in \mathcal{O}_{K_k}$ for $k=1,2$, hence  
      $\alpha_k \in \mathcal{O}_{K_k}^{\times}$. This proves  
      $\psi \in \mathcal{O}_{K_1}^{\times}\eg_1 + \mathcal{O}_{K_2}^{\times}\eg_2$, and therefore
      $$
      \mathcal{O}_L^{\times}
         = \mathcal{O}_{K_1}^{\times}\eg_1 + \mathcal{O}_{K_2}^{\times}\eg_2.
      $$
      From this decomposition, $\mathcal{O}_L^{\times}$ is finite if and only if 
      $\mathcal{O}_{K_1}^{\times}$ and $\mathcal{O}_{K_2}^{\times}$ are finite.  
      The desired classification of bicomplex extensions $L=K_1\eg_1 + K_2\eg_2$ with finite unit group 
      then follows immediately from this observation and from the classical fact that  
      the unit group $\mathcal{O}_K^{\times}$ of a number field $K$ is finite if and only if  
      $K=\Q$ or $K=\Q(\ig\sqrt{d})$ with $d>0$ squarefree (see \cite[Chapter~8, Exercise~8.1.6]{Murty}).
      \end{proof}
     
     
    \subsection{Ideals of $\mathcal{O}_L$ and Dedekind zeta function}
     In this section we describe the structure of ideals and prime ideals in the ring of integers $\mathcal{O}_L$ of a bicomplex extension $L=\Q[\omega]= K_1\eg_1+K_2\eg_2$, where $\omega\notin\C_\ug$ for $\ug\in\{\ig,\kg\}$. 
    This decomposition allows us, in a natural way, to introduce the arithmetic zeta function associated with $L$ and to express it in terms of Dirichlet series arising from the components $K_1$ and $K_2$. By Theorem \ref{decompintgers}, one has the decomposition
    $$
    \mathcal{O}_L=(\eg_1)+(\eg_2),\qquad 
    (\eg_1)\cap(\eg_2)=(0).
   $$
    The ideals $(\eg_k)=\eg_k\mathcal{O}_L=\eg_k\mathcal{O}_{K_k}$ will be called \emph{degenerate ideals} of $\mathcal{O}_L$.
    
    \begin{prop}\label{ideal}
    Every ideal $\fraka$ of $\mathcal{O}_L$ is of the form
    $$
    \fraka=\fraka_1\eg_1+\fraka_2\eg_2,
    $$
    where $\fraka_k$ is an ideal of $\mathcal{O}_{K_k}$ for $k=1,2$.
    \end{prop}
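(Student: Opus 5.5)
The plan is to use the idempotents $\eg_1,\eg_2\in\mathcal{O}_L$ to split $\fraka$ into its two components. By Theorem~\ref{decompintgers}, $\eg_k=1\cdot\eg_k\in\mathcal{O}_{K_1}\eg_1+\mathcal{O}_{K_2}\eg_2=\mathcal{O}_L$, since $1\in\mathcal{O}_{K_k}$. Given an ideal $\fraka$ of $\mathcal{O}_L$, define
$$
\fraka_k:=\mathcal{P}_k(\fraka)=\{\mathcal{P}_k(\psi):\psi\in\fraka\}\subset\C,\qquad k=1,2.
$$
Since every $\psi\in\mathcal{O}_L$ has $\mathcal{P}_k(\psi)\in\mathcal{O}_{K_k}$, we get $\fraka_k\subset\mathcal{O}_{K_k}$.

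\textbf{Step 1: each $\fraka_k$ is an ideal of $\mathcal{O}_{K_k}$.} Because $\mathcal{P}_k$ is a ring homomorphism, $\fraka_k$ is an additive subgroup. For $x\in\mathcal{O}_{K_1}$ and $\psi\in\fraka$, the element $x\eg_1+0\,\eg_2$ lies in $\mathcal{O}_L$ by Theorem~\ref{decompintgers}. Hence $(x\eg_1)\psi\in\fraka$, and $\mathcal{P}_1\bigl((x\eg_1)\psi\bigr)=x\,\mathcal{P}_1(\psi)$. So $\fraka_1$ is absorbing, and the same argument works for $k=2$.

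\textbf{Step 2: $\fraka=\fraka_1\eg_1+\fraka_2\eg_2$.} For the inclusion $\subset$, write $\psi\in\fraka$ via \eqref{idempdecomp} as $\psi=\mathcal{P}_1(\psi)\eg_1+\mathcal{P}_2(\psi)\eg_2$, which lies in $\fraka_1\eg_1+\fraka_2\eg_2$ by definition. For $\supset$, take $a_1\in\fraka_1$ and $a_2\in\fraka_2$, and choose $\psi,\phi\in\fraka$ with $\mathcal{P}_1(\psi)=a_1$ and $\mathcal{P}_2(\phi)=a_2$. Then
$$
a_1\eg_1+a_2\eg_2=\eg_1\psi+\eg_2\phi\in\fraka,
$$
since $\eg_1\psi=\mathcal{P}_1(\psi)\eg_1$ by the componentwise multiplication and $\eg_k\in\mathcal{O}_L$. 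Uniqueness of the $\fraka_k$ follows from the uniqueness of the idempotent decomposition.

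The only delicate point is that the argument requires the idempotents, and the elements $x\eg_k$, to belong to $\mathcal{O}_L$ itself and not merely to $L$. This is exactly what Theorem~\ref{decompintgers} provides, so no step should be a genuine obstacle.
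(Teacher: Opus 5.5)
Your proof is correct and follows the paper's route. The paper uses the component maps $\varphi_k:\mathcal{O}_L\to\mathcal{O}_{K_k}$, which agree with your $\mathcal{P}_k$ on $\mathcal{O}_L$, and asserts $\fraka=\varphi_1(\fraka)\eg_1+\varphi_2(\fraka)\eg_2$, relying only on surjectivity of $\varphi_k$; you make explicit the two steps it leaves implicit, namely that $x\eg_k\in\mathcal{O}_L$ gives the absorption property and that $\eg_1\psi+\eg_2\phi\in\fraka$ gives the reverse inclusion.
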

    
    \begin{proof}
    From the decomposition $\mathcal{O}_L=\mathcal{O}_{K_1}\eg_1+\mathcal{O}_{K_2}\eg_2$ given in Theorem \ref{decompintgers}, the maps
    \begin{equation}\label{phikmaps}
    \varphi_k:\mathcal{O}_L\longrightarrow\mathcal{O}_{K_k},\qquad 
    \varphi_k(\gamma_1\eg_1+\gamma_2\eg_2)=\gamma_k,
    \end{equation}
    are surjective ring homomorphisms.  
    Thus every ideal $\fraka\subseteq\mathcal{O}_L$ satisfies
    $$
    \fraka=\varphi_1(\fraka)\eg_1+\varphi_2(\fraka)\eg_2,
    $$
    where $\fraka_k=\varphi_k(\fraka)$ is an ideal of $\mathcal{O}_{K_k}$ for $k=1,2$.
    \end{proof}
    
    \begin{prop}\label{generideal}
    Every ideal of $\mathcal{O}_L$ is finitely generated.
    \end{prop}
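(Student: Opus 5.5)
The plan is to reduce the statement to the classical fact that the ring of integers of a number field is Noetherian; equivalently, every ideal of $\mathcal{O}_{K}$ is finitely generated, and in fact generated by at most two elements. The idempotent decomposition then transports generators from the components $K_1$ and $K_2$ to $\mathcal{O}_L$.

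Let $\fraka$ be an ideal of $\mathcal{O}_L$. By Proposition~\ref{ideal}, $\fraka=\fraka_1\eg_1+\fraka_2\eg_2$ with $\fraka_k=\varphi_k(\fraka)$ an ideal of $\mathcal{O}_{K_k}$. Since $\mathcal{O}_{K_k}$ is a free $\Z$-module of finite rank and $\Z$ is Noetherian, $\fraka_k$ is a finitely generated $\Z$-module. Choose generators $a_{1},\dots,a_{p}$ of $\fraka_1$ and $b_{1},\dots,b_{q}$ of $\fraka_2$ as ideals. I then claim that
$$
\fraka=(a_1\eg_1,\dots,a_p\eg_1,\,b_1\eg_2,\dots,b_q\eg_2).
$$

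The claim needs two inclusions. First, each listed generator lies in $\fraka$. Take $a_i=\varphi_1(x)$ with $x\in\fraka$; then $a_i\eg_1=\eg_1x\in\fraka$, because $\eg_1=1\cdot\eg_1+0\cdot\eg_2\in\mathcal{O}_L$ by Theorem~\ref{decompintgers}. The same argument with $\eg_2$ handles the $b_j$. Conversely, any element of $\fraka$ has the form $x_1\eg_1+x_2\eg_2$ with $x_1=\sum c_ia_i$ and $x_2=\sum d_jb_j$, where $c_i\in\mathcal{O}_{K_1}$ and $d_j\in\mathcal{O}_{K_2}$. It therefore equals $\sum (c_i\eg_1)(a_i\eg_1)+\sum (d_j\eg_2)(b_j\eg_2)$, and the coefficients $c_i\eg_1$, $d_j\eg_2$ lie in $\mathcal{O}_L$.

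There is a shorter alternative. By Theorem~\ref{decompintgers}, $\mathcal{O}_L$ is a free $\Z$-module of rank $n$. Every ideal is a $\Z$-submodule, hence free of rank at most $n$ (submodules of free modules over a PID are free), hence finitely generated even as a $\Z$-module. I would mention this as the cleanest route and keep the componentwise argument for the explicit generators.

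No step is a real obstacle. The only point needing care is checking that $\eg_1,\eg_2\in\mathcal{O}_L$, so that the componentwise generators genuinely lie in $\fraka$; this follows from the decomposition in Theorem~\ref{decompintgers}.
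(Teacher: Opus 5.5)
Your componentwise argument is correct and is essentially the paper's proof: decompose $\fraka=\fraka_1\eg_1+\fraka_2\eg_2$ via Proposition~\ref{ideal} and lift generators of the component ideals. The paper instead packs the generators as $c_k=a'_k\eg_1+b_k\eg_2$, padding the shorter list with zeros, so $\max(r,s)$ generators suffice instead of your $r+s$; the paper relies on this later to conclude that $\mathcal{O}_L$ is principal when both $\mathcal{O}_{K_k}$ are, and your $\Z$-module alternative is also a valid, shorter route to the stated result.
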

    
    \begin{proof}
    Let $\fraka$ be an ideal of $\mathcal{O}_L$.  
    By Proposition \ref{ideal},
    $$
    \fraka=\fraka_1\eg_1+\fraka_2\eg_2,\qquad 
    \fraka_k\subseteq\mathcal{O}_{K_k}.
    $$
    Since ideals in $\mathcal{O}_{K_k}$ are finitely generated, write
    $$
    \fraka_1=(a_1,\dots,a_r),\qquad 
    \fraka_2=(b_1,\dots,b_s).
    $$
    Assume $r\le s$ and define $c_k\in\mathcal{O}_L$ by
    $$
    c_k=a'_k\eg_1+b_k\eg_2,\qquad 
    a'_k=
    \begin{cases}
    a_k,& k\le r,\\[2pt]
    0,& k>r.
    \end{cases}
    $$
    Then $\fraka=(c_1,\dots,c_s)$.
    \end{proof}
    
    We now characterize prime ideals of $\mathcal{O}_L$.  
    Recall that an ideal $\fraka$ in a commutative ring $R$ is prime if $ab\in\fraka$ implies $a\in\fraka$ or $b\in\fraka$, equivalently if $R/\fraka$ is an integral domain.  In particular, the zero ideal $(0)$ is prime if and only if $R$ itself is an integral domain.\\  
    In the present setting, the zero ideal of $\mathcal{O}_L$ is prime if and only if $L=\Q[\omega]$ with $\omega \in \C_{\ug}$ for some $\ug \in \{\ig,\kg\}$. Nevertheless, as we shall see below, the degenerate ideals $(\eg_1)$ and $(\eg_2)$ are prime ideals of $\mathcal{O}_L$.
    
    \begin{prop}\label{primeideal}
    The prime ideals of $\mathcal{O}_L$ are exactly the ideals of the form
    $$
    \frakp_1\eg_1+\mathcal{O}_{K_2}\eg_2,
    \qquad
    \mathcal{O}_{K_1}\eg_1+\frakp_2\eg_2,
    $$
    where $\frakp_k$ is a prime ideal of $\mathcal{O}_{K_k}$ for $k=1,2$.  
    In particular, the degenerate ideals $(\eg_k)=\eg_k\mathcal{O}_{K_k}$ are prime.
    \end{prop}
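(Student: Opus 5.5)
We start from Proposition~\ref{ideal}: every ideal of $\mathcal{O}_L$ has the form $\fraka=\fraka_1\eg_1+\fraka_2\eg_2$, where $\fraka_k=\varphi_k(\fraka)$ is an ideal of $\mathcal{O}_{K_k}$. The key tool is the ring isomorphism \eqref{ringintergiso}, $\mathcal{O}_L\simeq\mathcal{O}_{K_1}\times\mathcal{O}_{K_2}$. Under it,
\[
\mathcal{O}_L/\fraka\;\simeq\;\mathcal{O}_{K_1}/\fraka_1\times\mathcal{O}_{K_2}/\fraka_2 .
\]
To justify this, consider the surjection $\mathcal{O}_L\to\mathcal{O}_{K_1}/\fraka_1\times\mathcal{O}_{K_2}/\fraka_2$ given by $\gamma_1\eg_1+\gamma_2\eg_2\mapsto(\gamma_1+\fraka_1,\gamma_2+\fraka_2)$. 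Its kernel is exactly $\fraka_1\eg_1+\fraka_2\eg_2=\fraka$. The only point to check is that $\fraka$ really contains every element $a_1\eg_1+a_2\eg_2$ with $a_k\in\fraka_k$. This holds because $\eg_k\in\mathcal{O}_L$: if $a_1=\varphi_1(x)$ with $x\in\fraka$, then $a_1\eg_1=\eg_1x\in\fraka$, and similarly for $a_2\eg_2$.

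Next we use the condition that $\fraka$ is prime, i.e.\ that $\mathcal{O}_L/\fraka$ is an integral domain; in particular it is a nonzero ring. A product of two rings $A\times B$ is an integral domain if and only if one factor is zero and the other is a domain. Indeed, if both $A$ and $B$ are nonzero, then $(1,0)(0,1)=0$ exhibits zero divisors. Applying this to $\mathcal{O}_{K_1}/\fraka_1\times\mathcal{O}_{K_2}/\fraka_2$ gives two cases. Either $\fraka_2=\mathcal{O}_{K_2}$ and $\mathcal{O}_{K_1}/\fraka_1$ is a domain, which means $\fraka_1=\frakp_1$ is prime in $\mathcal{O}_{K_1}$. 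Or, symmetrically, $\fraka_1=\mathcal{O}_{K_1}$ and $\fraka_2=\frakp_2$ is prime in $\mathcal{O}_{K_2}$. This gives the two stated forms $\frakp_1\eg_1+\mathcal{O}_{K_2}\eg_2$ and $\mathcal{O}_{K_1}\eg_1+\frakp_2\eg_2$.

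Conversely, for ideals of these two forms the quotient is isomorphic to $\mathcal{O}_{K_k}/\frakp_k$, which is an integral domain. So these ideals are prime, and the characterization is complete.

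For the degenerate ideals, note that $(\eg_1)=\eg_1\mathcal{O}_{K_1}=(0)\eg_1+\mathcal{O}_{K_1}\eg_1$ is not literally the same object as the first form. Writing $(\eg_1)=\mathcal{O}_{K_1}\eg_1+(0)\eg_2$ shows that it is of the second form with $\frakp_2=(0)$, and $(0)$ is prime in the domain $\mathcal{O}_{K_2}$. Similarly, $(\eg_2)=(0)\eg_1+\mathcal{O}_{K_2}\eg_2$ is of the first form with $\frakp_1=(0)$. This requires reading ``prime ideal of $\mathcal{O}_{K_k}$'' to include the zero ideal, consistent with the paper's remark that $(0)$ is prime exactly in integral domains.

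The main obstacle is this bookkeeping rather than any deep step: verifying the product decomposition of the quotient, and making sure the zero prime ideal is allowed in the statement. Everything else is the elementary fact about products of rings.
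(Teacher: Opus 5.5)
Your proof is correct and follows the paper's argument: the isomorphism $\mathcal{O}_L/\fraka\simeq\mathcal{O}_{K_1}/\fraka_1\times\mathcal{O}_{K_2}/\fraka_2$, combined with the fact that a product of rings is a domain exactly when one factor is zero and the other is a domain. Your verification of the kernel (using $\eg_k\in\mathcal{O}_L$) and your bookkeeping for the degenerate ideals are both right. On the latter, $(\eg_1)$ arises from $\frakp_2=(0)$, not from $\frakp_1=(0)$, and you state this more carefully than the paper's phrasing does.
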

    
    \begin{proof}
    Let $\fraka$ be an ideal of $\mathcal{O}_L$.  
    By Proposition \ref{ideal}, 
    $$
    \fraka=\fraka_1\eg_1+\fraka_2\eg_2,\qquad 
    \fraka_k=\varphi_k(\fraka)\subseteq\mathcal{O}_{K_k},
    $$
    where $\varphi_k$ is defined in \eqref{phikmaps}.  
    Define
    $$
    \bar{\varphi} : \mathcal{O}_L/\fraka \longrightarrow
    \mathcal{O}_{K_1}/\fraka_1 \times \mathcal{O}_{K_2}/\fraka_2,
    \qquad
    \bar{\varphi}[\varsigma]
    =\big([\varphi_1(\varsigma)]_1,\,[\varphi_2(\varsigma)]_2\big).
    $$
    This yields the ring isomorphism
    \begin{equation}\label{eqisomideal}
    \mathcal{O}_L/\fraka
    \simeq
    \mathcal{O}_{K_1}/\fraka_1
    \times
    \mathcal{O}_{K_2}/\fraka_2.
    \end{equation}
    The product on the right is an integral domain precisely when one factor is zero and the other is an integral domain.  
    Thus $\fraka$ is prime if and only if one of the $\fraka_k$ equals $\mathcal{O}_{K_k}$ and the other is a prime ideal of $\mathcal{O}_{K_k}$.  
    Taking $\frakp_k=(0)$, which is prime in $\mathcal{O}_{K_k}$, shows that $( \eg_k)= \eg_k \mathcal{O}_L= \eg_k \mathcal{O}_{K_k}$ is prime in $\mathcal{O}_L$ for $k=1,2$. 
    \end{proof}
    
    Recall that if $K$ is a number field, every non-zero ideal $\fraka\subseteq\mathcal{O}_K$ has finite index, and its norm is
    $$
    \mathrm{N}_K(\fraka):=\big|\mathcal{O}_K/\fraka\big|.
    $$
    The Dedekind zeta function of $K$ is the Dirichlet series
    $$
    \zeta_K(s)=\sum_{(0)\neq\fraka\subseteq\mathcal{O}_K}
    \frac{1}{\mathrm{N}_K(\fraka)^s}
    =\sum_{n\ge1}\frac{a_K(n)}{n^s},\qquad s \in \mathbb{C},\;
    \Rj(s)>1,
    $$
    where $ a_K(n)$ is the number of ideals in $\mathcal{O}_K $ of norm $n$.
    Let $L=K_1\eg_1+K_2\eg_2$ be a bicomplex extension.  
    Then, by \eqref{eqisomideal}, if $\fraka=\fraka_1\eg_1+\fraka_2\eg_2$ is non-zero and non-degenerate, the $\fraka_k$ are non-zero, and
    $$
    \big|\mathcal{O}_L/\fraka\big|
    =
    \big|\mathcal{O}_{K_1}/\fraka_1\big|\,
    \big|\mathcal{O}_{K_2}/\fraka_2\big|
    =\mathrm{N}_{K_1}(\fraka_1)\,\mathrm{N}_{K_2}(\fraka_2),
    $$
    which defines the norm $\mathrm{N}_L(\fraka)$ and yields
    \begin{equation}\label{eqnormideal}
    \mathrm{N}_L(\fraka)
    =
    \mathrm{N}_{K_1}(\fraka_1)\,
    \mathrm{N}_{K_2}(\fraka_2).
    \end{equation}
    
    \begin{defi}\label{zeta L}
    The zeta function of the bicomplex extension $L$ is defined for $s \in \mathbb{C}$ and
    $\Rj(s)>1$ by
    $$
    \zeta_L(s)
    :=
    \sum_{\substack{(0)\neq\fraka\subseteq\mathcal{O}_L\\
    \fraka\neq(\eg_1),(\eg_2)}}
    \frac{1}{\mathrm{N}_L(\fraka)^s}.
    $$
    \end{defi}
    
    \begin{prop}\label{eqzeta}
    For every bicomplex extension $L=K_1\eg_1+K_2\eg_2$,
    $$
    \zeta_L(s)=\zeta_{K_1}(s)\,\zeta_{K_2}(s).
    $$
    \end{prop}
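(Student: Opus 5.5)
The proof is a bijection-and-multiplicativity argument. By Proposition~\ref{ideal}, every ideal of $\mathcal{O}_L$ has the form $\fraka=\fraka_1\eg_1+\fraka_2\eg_2$ with $\fraka_k=\varphi_k(\fraka)$ an ideal of $\mathcal{O}_{K_k}$. Conversely, any pair $(\fraka_1,\fraka_2)$ of ideals yields an ideal $\fraka_1\eg_1+\fraka_2\eg_2$ of $\mathcal{O}_L$, since $\eg_1,\eg_2$ are orthogonal idempotents and multiplication is componentwise. The map $\fraka\mapsto(\fraka_1,\fraka_2)$ is therefore a bijection between ideals of $\mathcal{O}_L$ and pairs of ideals of $\mathcal{O}_{K_1}\times\mathcal{O}_{K_2}$.

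First I identify the index set of Definition~\ref{zeta L}: the ideals $\fraka\neq(0)$ with $\fraka\ne(\eg_1),(\eg_2)$. The plan is to match it with the pairs $(\fraka_1,\fraka_2)$ in which both components are nonzero, because exactly those pairs contribute to the product $\zeta_{K_1}(s)\zeta_{K_2}(s)$.

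This matching is the main obstacle. Ideals of the form $\fraka_1\eg_1+(0)\eg_2$ with $\fraka_1$ a nonzero proper ideal, and not only $(\eg_1)=\mathcal{O}_{K_1}\eg_1$, also have a zero component. For these the quotient $\mathcal{O}_L/\fraka$ is infinite, so $\mathrm{N}_L$ is not defined. I will argue that the norm \eqref{eqnormideal} was only defined for ideals with both components nonzero, i.e.\ of finite index. Accordingly, I read the sum in Definition~\ref{zeta L} as ranging over ideals of finite index in $\mathcal{O}_L$; the exclusion of $(\eg_1),(\eg_2)$ signals the removal of the degenerate cases. I will state this explicitly in the proof, since it is the only interpretation under which the sum makes sense.

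With that settled, for $\Rj(s)>1$ I apply \eqref{eqnormideal}:
$$
\zeta_L(s)=\sum_{\fraka_1\neq(0)}\sum_{\fraka_2\neq(0)}\frac{1}{\mathrm{N}_{K_1}(\fraka_1)^s\,\mathrm{N}_{K_2}(\fraka_2)^s}=\zeta_{K_1}(s)\,\zeta_{K_2}(s).
$$
Splitting the double sum into a product is legitimate because both Dedekind series converge absolutely for $\Rj(s)>1$, so the terms may be rearranged freely. I could also phrase the result at the level of coefficients: $a_L(n)=\sum_{d\mid n}a_{K_1}(d)\,a_{K_2}(n/d)$, the Dirichlet convolution.
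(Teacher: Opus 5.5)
Your proof is correct and follows the same route as the paper. You use the correspondence $\fraka\leftrightarrow(\fraka_1,\fraka_2)$ from Proposition~\ref{ideal}, apply the multiplicativity \eqref{eqnormideal} of the norm, and factor the double sum, and you rightly justify that factorization by absolute convergence for $\Rj(s)>1$, which the paper leaves implicit. Reading the index set as the finite-index ideals (both components nonzero) correctly repairs an imprecision in the paper: it tacitly claims that every nonzero ideal other than $(\eg_1),(\eg_2)$ has both components nonzero, but $\fraka_1\eg_1$ with $\fraka_1$ a nonzero proper ideal of $\mathcal{O}_{K_1}$ is a counterexample.
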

    
    \begin{proof}
    Using Definition \ref{zeta L} and \eqref{eqnormideal},
    \begin{align*}
    \zeta_L(s)
    &=
    \sum_{\substack{(0)\neq\fraka\subseteq\mathcal{O}_L\\
    \fraka\neq(\eg_1),(\eg_2)}}
    \frac{1}{\mathrm{N}_L(\fraka)^s}  \\[4pt]
    &=\left(
    \sum_{(0)\neq\fraka_1\subseteq\mathcal{O}_{K_1}}
    \frac{1}{\mathrm{N}_{K_1}(\fraka_1)^s}
    \right)
    \left(
    \sum_{(0)\neq\fraka_2\subseteq\mathcal{O}_{K_2}}
    \frac{1}{\mathrm{N}_{K_2}(\fraka_2)^s}
    \right)  \\[4pt]
    &=\zeta_{K_1}(s)\,\zeta_{K_2}(s).
    \end{align*}
    \end{proof}
    
    Therefore, the generating function $a_L(n)$ of $\zeta_L$ satisfies
    \begin{equation}\label{eqgenerating}
    a_L(n)=(a_{K_1}\ast a_{K_2})(n),\qquad n\ge1,
    \end{equation}
    where $f\ast g$ denotes the Dirichlet convolution:
    $$
    (f\ast g)(n)
    :=
    \sum_{pq=n} f(p)g(q)
    =
    \sum_{d\mid n} f(d)\,g(n/d),\qquad n\ge1.
    $$
     
    \subsection{Prime and irreducible elements in $\mathcal{O}_L$}
    
    Recall that an element $\pi$ in a commutative ring $R$ is said to be \emph{prime} (resp.\ \emph{irreducible}) if $\pi$ is non-zero and non-unit, and if the ideal $(\pi)$ is prime (resp.\ if $\pi = ab$ implies that either $a$ or $b$ is a unit).  
    If $R$ is a principal ideal domain, then irreducible elements are prime and $R$ is a unique factorization domain. In this case, every non-zero and non-unit element $a \in R$ admits a unique factorization of the form
    \begin{equation}\label{eqfacto}
        a = u \pi_1^{\alpha_1} \cdots \pi_k^{\alpha_k},
    \end{equation}
    where $u$ is a unit, the $\pi_j$ are irreducible elements of $R$, and $\alpha_j \in \mathbb{Z}_{>0}$ for all $j=1,\dots,k$.\\
    We show next that this property extends naturally to the ring of integers $\mathcal{O}_L$.  
    Throughout, we say that \emph{unique factorization holds in $\mathcal{O}_L$} if every $\upsilon \in \mathcal{O}_L$ with $N(\upsilon)\neq 0$ and $\upsilon \notin \mathcal{O}_L^\times$ admits a unique factorization of the form~\eqref{eqfacto}.  
    To this end, we first establish a precise description of prime elements in $\mathcal{O}_L$.
    
    \begin{prop}\label{primes}
    Up to multiplication by a unit, the prime elements in $\mathcal{O}_L$ are
    \begin{equation}\label{eqprimes}
               \eg_1,\qquad \eg_2,\qquad 
               \pi_1 \eg_1 + \eg_2,\qquad 
               \eg_1 + \pi_2 \eg_2,
           \end{equation}
    where, for $k=1,2$, the element $\pi_k$ is a prime element of $\mathcal{O}_{K_k}$.\\
    In particular, if $\mathcal{O}_L$ is a principal ring, then the irreducible elements are primes with non-zero norm, i.e. the primes $\pi$ of the form , up to units,
  $$
            \pi_1 \eg_1 + \eg_2\qquad or \qquad
           \eg_1 + \pi_2 \eg_2.
       $$
    \end{prop}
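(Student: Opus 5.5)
The plan is to reduce everything to Proposition~\ref{primeideal}, which lists the prime ideals of $\mathcal{O}_L$, together with the unit description of Proposition~\ref{unitgroup}. Let $\pi=\alpha_1\eg_1+\alpha_2\eg_2\in\mathcal{O}_L$ be nonzero and non-unit. By Proposition~\ref{ideal}, the principal ideal it generates is
$$
(\pi)=\alpha_1\mathcal{O}_{K_1}\eg_1+\alpha_2\mathcal{O}_{K_2}\eg_2 .
$$
By Proposition~\ref{primeideal}, $(\pi)$ is prime exactly when one of two things holds. Either $\alpha_1\mathcal{O}_{K_1}$ is prime in $\mathcal{O}_{K_1}$ and $\alpha_2\mathcal{O}_{K_2}=\mathcal{O}_{K_2}$, or the symmetric condition holds with the indices exchanged. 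The case analysis is driven by whether the prime ideal $\alpha_k\mathcal{O}_{K_k}$ is zero or nonzero.

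\emph{Case of a zero prime.} If $\alpha_1\mathcal{O}_{K_1}$ is the zero prime, then $\alpha_1=0$ and $\alpha_2$ is a unit of $\mathcal{O}_{K_2}$. Hence
$$
\pi=\alpha_2\eg_2=(\eg_1+\alpha_2\eg_2)\,\eg_2 ,
$$
where $\eg_1+\alpha_2\eg_2\in\mathcal{O}_L^\times$ by Proposition~\ref{unitgroup}. So $\pi$ is associate to $\eg_2$. The same argument with indices exchanged gives $\eg_1$.

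\emph{Case of a nonzero prime.} If $\alpha_1\mathcal{O}_{K_1}$ is a nonzero prime, then $\pi_1:=\alpha_1$ is a prime element of $\mathcal{O}_{K_1}$, and $\alpha_2$ is again a unit. Then
$$
\pi=(\eg_1+\alpha_2\eg_2)(\pi_1\eg_1+\eg_2),
$$
so $\pi$ is associate to $\pi_1\eg_1+\eg_2$. The symmetric case gives $\eg_1+\pi_2\eg_2$.

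\emph{Converse.} Each element in \eqref{eqprimes} is nonzero and non-unit, since one component is $0$ or a non-unit. Each generates an ideal of the form required by Proposition~\ref{primeideal}, so each is prime. This bookkeeping is routine once the idempotent decomposition is used.

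For the ``in particular'' statement, assume $\mathcal{O}_L$ is principal. Then each $\mathcal{O}_{K_k}$ is principal, as the image under the surjection $\varphi_k$. The plan here has three steps.

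\emph{Step 1: $\eg_1,\eg_2$ are not irreducible.} Write $\eg_1=\eg_1\cdot\eg_1$, where $\eg_1$ is not a unit because its $\eg_2$-component is $0$. This exhibits a factorization with no unit factor.

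\emph{Step 2: an irreducible $\pi$ has one unit component.} Let $\pi=\alpha_1\eg_1+\alpha_2\eg_2$ be irreducible. If both $\alpha_k$ were non-units, the factorization
$$
\pi=(\alpha_1\eg_1+\eg_2)(\eg_1+\alpha_2\eg_2)
$$
would contradict irreducibility. So one component, say $\alpha_2$, is a unit. Then $\alpha_1$ must be irreducible in $\mathcal{O}_{K_1}$. It is nonzero, since otherwise $\pi\sim\eg_2$, which is not irreducible by Step~1.

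\emph{Step 3: irreducible implies prime.} Because $\mathcal{O}_{K_1}$ is a PID, $\alpha_1$ is prime. Hence $\pi\sim\pi_1\eg_1+\eg_2$, which is prime with nonzero norm.

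\emph{Reverse direction.} The elements $\pi_1\eg_1+\eg_2$ are irreducible. Indeed, from $\pi_1\eg_1+\eg_2=ab$, the $\eg_2$-components of $a$ and $b$ are units, and one of their $\eg_1$-components is a unit because $\pi_1$ is irreducible. So $a$ or $b$ is a unit.

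The main subtlety is Step~1 of the ``in particular'' part. It must be checked that primeness of $\eg_k$ does not entail irreducibility; this is exactly the idempotent phenomenon, and the factorization $\eg_1=\eg_1\cdot\eg_1$ settles it.
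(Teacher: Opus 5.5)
Your proposal is correct. For the classification of prime elements it follows the paper: you compute $(\pi)=\alpha_1\mathcal{O}_{K_1}\eg_1+\alpha_2\mathcal{O}_{K_2}\eg_2$ and read the answer off Proposition~\ref{primeideal}. You also spell out the zero/nonzero prime cases and the unit bookkeeping, which the paper compresses into one sentence.

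The ``in particular'' part takes a different route. The paper uses principality of $\mathcal{O}_L$ globally. An irreducible $\pi$ generates a maximal ideal, hence is prime, hence is one of the four types already found. The paper then discards $\eg_k$ via $\eg_k^2=\eg_k$ and checks that the remaining two types are irreducible.

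You work componentwise instead. The factorization $\pi=(\alpha_1\eg_1+\eg_2)(\eg_1+\alpha_2\eg_2)$ forces a unit component, and the other component must then be a nonzero irreducible of $\mathcal{O}_{K_1}$. You leave one step implicit here and should write it out: $\alpha_1=bc$ gives $\pi=(b\eg_1+\alpha_2\eg_2)(c\eg_1+\eg_2)$, so $b$ or $c$ is a unit. You use principality only at the end, to turn irreducible into prime in $\mathcal{O}_{K_1}$.

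The paper's route is shorter because it recycles the first half. Yours shows something more: the description of irreducibles of $\mathcal{O}_L$ (one unit component, the other irreducible) needs no principality hypothesis at all. You also derive principality of $\mathcal{O}_{K_k}$ correctly from surjectivity of $\varphi_k$. The paper instead attributes this to Proposition~\ref{generideal}, which only asserts finite generation.

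The reverse direction, irreducibility of $\pi_1\eg_1+\eg_2$, is the same in both arguments. There the only fact needed is that primes are irreducible in the domain $\mathcal{O}_{K_1}$, with no principality required.
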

    
    \begin{proof}
    Let $\pi \in \mathcal{O}_L$ be non-zero and non-unit. Then $\pi$ is prime if and only if $(\pi)$ is a prime ideal, i.e.\ $\mathcal{O}_L/(\pi)$ is an integral domain.  
    Using Proposition~\ref{primeideal}, together with the fact that the zero ideal is prime in an integral domain, we obtain that $\pi$ is, up to unit, of one of the following forms:
    $$
        \eg_1,\qquad \eg_2,\qquad 
        \pi_1 \eg_1 + \eg_2,\qquad 
        \eg_1 + \pi_2 \eg_2,
    $$
    where $\pi_k$ is a prime element of $\mathcal{O}_{K_k}$ for $k=1,2$.    
    Now assume that $\mathcal{O}_L$ is a principal ring.  
    If $\pi$ is irreducible, then $(\pi)$ is maximal, hence $\mathcal{O}_L/(\pi)$ is a field, in particular an integral domain.  
    Thus $\pi$ is prime and therefore, up to unit, of one of the forms in~\eqref{eqprimes}.  
    Since $\eg_k$ is not irreducible (because $\eg_k^2=\eg_k$), it remains to prove that the other two types are irreducible. Consider $\pi = \pi_1 \eg_1 + \eg_2$ and assume $\pi = ab$ with $a,b \in \mathcal{O}_L$.  
    By Theorem~\ref{decompintgers}, we may write
    $$
        a = a_1\eg_1 + a_2\eg_2 ,\qquad
        b = b_1\eg_1 + b_2\eg_2 ,
    $$
    with $a_k, b_k \in \mathcal{O}_{K_k}$.  
    Then
    $$
       \pi = \pi_1\eg_1 + \eg_2 = a_1 b_1 \eg_1 + a_2 b_2 \eg_2,
    $$
    so that $\pi_1 = a_1 b_1$ and $a_2 b_2 = 1$.  
    Since $\mathcal{O}_{K_1}$ and $\mathcal{O}_{K_2}$ are principal rings by Proposition~\ref{generideal}, the prime $\pi_1$ is irreducible in $\mathcal{O}_{K_1}$.  
    Thus either $a_1$ or $b_1$ is a unit of $\mathcal{O}_{K_1}$, and since $a_2b_2=1$, both $a_2$ and $b_2$ are units of $\mathcal{O}_{K_2}$.  
    Hence either $a$ or $b$ is a unit in $\mathcal{O}_L$, proving that $\pi$ is irreducible.  
    The same reasoning applies to $\eg_1 + \pi_2 \eg_2$.  
    This completes the proof.
    \end{proof}
    
    \begin{rk}\label{remprime}\rm
    Proposition~\ref{primes} shows that, for any number field $K$, every prime $\pi \in \mathcal{O}_K$ becomes \emph{semiprime} in the ring of integers $\mathcal{O}_L$ of the bicomplex extension $L = K\eg_1 + K\eg_2$.  
    Indeed,
    $$
       \pi = (\pi \eg_1 + \eg_2)(\eg_1 + \pi \eg_2),
    $$
    and each factor is prime in $\mathcal{O}_L$.
    \end{rk}
    
    \begin{prop}\label{unique factorization}
    Unique factorization holds in $\mathcal{O}_L$ whenever $\mathcal{O}_L$ is a principal ring.
    \end{prop}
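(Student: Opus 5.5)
The idea is to reduce the statement to unique factorization in the two component rings $\mathcal{O}_{K_1}$ and $\mathcal{O}_{K_2}$, using the decomposition $\mathcal{O}_L=\mathcal{O}_{K_1}\eg_1+\mathcal{O}_{K_2}\eg_2$ of Theorem~\ref{decompintgers}.

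\textbf{Reduction to the components.} Assume $\mathcal{O}_L$ is principal. By Proposition~\ref{ideal}, every ideal of $\mathcal{O}_{K_k}$ has the form $\varphi_k(\fraka)$ for some ideal $\fraka$ of $\mathcal{O}_L$: given $\fraka_1$, take $\fraka=\fraka_1\eg_1+\mathcal{O}_{K_2}\eg_2$. If $\fraka=(c)$, then $\fraka_1=(\varphi_1(c))$. Hence each $\mathcal{O}_{K_k}$ is a principal ideal domain, and therefore a UFD.

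\textbf{Existence.} Let $\upsilon=\gamma_1\eg_1+\gamma_2\eg_2$ with $N(\upsilon)\neq0$ and $\upsilon\notin\mathcal{O}_L^\times$. Then $\gamma_1,\gamma_2\neq0$, and by Proposition~\ref{unitgroup} at least one $\gamma_k$ is not a unit. Factor each nonunit component in its UFD:
$$\gamma_1=u_1\pi_{1,1}\cdots\pi_{1,a},\qquad \gamma_2=u_2\pi_{2,1}\cdots\pi_{2,b},$$
and take the empty product when $\gamma_k$ is a unit. Because $(x\eg_1+\eg_2)(y\eg_1+\eg_2)=xy\eg_1+\eg_2$, and similarly in the second slot, we get
$$\upsilon=(u_1\eg_1+u_2\eg_2)\prod_i(\pi_{1,i}\eg_1+\eg_2)\prod_j(\eg_1+\pi_{2,j}\eg_2).$$
By Proposition~\ref{primes}, each factor is irreducible, so this is a factorization of the form \eqref{eqfacto}.

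\textbf{Uniqueness (the main step).} Suppose $\upsilon=u\,\rho_1\cdots\rho_m$ is any factorization into irreducibles. Since $N$ is multiplicative, $N(\rho_i)\neq0$ for every $i$. Since $\mathcal{O}_L$ is principal, Proposition~\ref{primes} says each $\rho_i$ is associate to some $\pi_1\eg_1+\eg_2$ or some $\eg_1+\pi_2\eg_2$, with $\pi_k$ prime in $\mathcal{O}_{K_k}$.

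Apply $\varphi_1$ and $\varphi_2$. The factors of the first type contribute exactly a prime factorization of $\gamma_1$ in $\mathcal{O}_{K_1}$, and contribute units to $\gamma_2$. Symmetrically, the factors of the second type give a prime factorization of $\gamma_2$ and contribute units to $\gamma_1$.

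Uniqueness in the UFDs $\mathcal{O}_{K_k}$ then fixes the multiset of $\pi_{1,i}$ and of $\pi_{2,j}$ up to units. We also check that $\pi_1\eg_1+\eg_2$ and $\pi_1'\eg_1+\eg_2$ are associate in $\mathcal{O}_L$ exactly when $\pi_1,\pi_1'$ are associate in $\mathcal{O}_{K_1}$, which follows from Proposition~\ref{unitgroup}. A factor of the first type is never associate to one of the second type, since their components differ in which slot is a unit.

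Consequently the irreducible factors of $\upsilon$ are determined up to units and order. This gives uniqueness in the sense of \eqref{eqfacto}, with exponents obtained by grouping associates.
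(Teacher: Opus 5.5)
Your proof is correct and follows the same route as the paper. Both factor each component in the UFD $\mathcal{O}_{K_k}$ and lift the prime factors to the irreducibles $\pi_1\eg_1+\eg_2$ and $\eg_1+\pi_2\eg_2$ of Proposition~\ref{primes}. Your proposal also supplies two details the paper only asserts. One is the uniqueness step: projecting an arbitrary factorization through $\varphi_1,\varphi_2$ and matching associate classes, where the paper just writes ``thus the factorization is unique''. The other is the direct check that each $\mathcal{O}_{K_k}$ is principal, for which the paper only cites Proposition~\ref{generideal}.
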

    
    \begin{proof}
    Assume that $\mathcal{O}_L$ is a principal ring; by Proposition~\ref{generideal} this is equivalent to requiring that each $\mathcal{O}_{K_k}$ is a principal ring.  
    Let $\upsilon = \upsilon_1 \eg_1 + \upsilon_2 \eg_2 \in \mathcal{O}_L$ with $N(\upsilon)\neq 0$ and $\upsilon$ not a unit of  $\mathcal{O}_L$.
    Suppose first that no $\upsilon_k$ is a unit in $\mathcal{O}_{K_k}$.  
    Since each $\mathcal{O}_{K_k}$ is unique factorization domain, we may write
    $$
        \upsilon_k = \mu_k \pi_{1,k}^{\alpha_{1,k}} \cdots \pi_{s_k,k}^{\alpha_{s_k,k}},
    $$
    where $\mu_k \in \mathcal{O}_{K_k}^\times$, the $\pi_{l_k,k}$ are irreducible in $\mathcal{O}_{K_k}$, and the $\alpha_{l_k,k}\in\mathbb{Z}_{>0}$.  
    Setting $\mu = \mu_1\eg_1 + \mu_2\eg_2$, we obtain
    \begin{align*}
        \upsilon
        &= \mu \left(\pi_{1,1}^{\alpha_{1,1}} \cdots \pi_{s_1,1}^{\alpha_{s_1,1}} \eg_1 + \eg_2\right)
           \left(\eg_1 + \pi_{1,2}^{\alpha_{1,2}} \cdots \pi_{s_2,2}^{\alpha_{s_2,2}} \eg_2\right) \\
        &= \mu
           \prod_{l_1=1}^{s_1}\left(\pi_{l_1,1}\eg_1 + \eg_2\right)^{\alpha_{l_1,1}}
           \prod_{l_2=1}^{s_2}\left(\eg_1 + \pi_{l_2,2}\eg_2\right)^{\alpha_{l_2,2}} .
    \end{align*}\\
    If one of the $\upsilon_k$ is a unit, say $\upsilon_2=\mu_2\in\mathcal{O}_{K_2}^\times$, then we obtain the simplified factorization
    $$
        \upsilon
        = \mu
          \prod_{l_1=1}^{s_1}\left(\pi_{l_1,1}\eg_1 + \eg_2\right)^{\alpha_{l_1,1}} .
    $$
    By Proposition~\ref{primes}, each $\pi_{l_1,1}\eg_1 + \eg_2$ and each $\eg_1 + \pi_{l_2,2}\eg_2$ is irreducible in $\mathcal{O}_L$.  
    Thus the factorization is unique.  
    \end{proof}
     
   	    \section{Examples of bicomplex extensions}
   	    In this section, we apply the main results obtained above to describe some properties of specific examples of quadratic and quartic bicomplex extensions.
   	    
   	    \subsection{Bicomplex quadratic extensions}
   	    By Theorem~\ref{decompalg}, bicomplex quadratic extensions $L=\Q[\omega]$ are precisely copies of quadratic fields $\Q(\ug\sqrt{d})$, where $d$ is a positive squarefree integer, whenever $\omega\in\C_\ug$ for some $\ug\in\{\ig,\kg\}$. Otherwise, one has
   	    $$
   	    L=K_1\eg_1+K_2\eg_2,
   	    $$
   	    with $K_1,K_2$ number fields of degree one. In this case, $L$ coincides with the hyperbolic extension $\Qh$, given by
   	    \begin{equation}\label{hyprational}
   	    \Qh=\Q\eg_1+\Q\eg_2=\Q[\jg].
   	    \end{equation}
   	    
   	    The ring $\Qh$ is called the ring of hyperbolic rational numbers. Its ring of integers is the free $\Z$-module
   	    \begin{equation}\label{hypintegers}
   	    \ZB_h=\Z\eg_1+\Z\eg_2=\Z[\eg],\qquad \eg\in\{\eg_1,\eg_2\}.
   	    \end{equation}
   	    The ring $\ZB_h$, called the ring of hyperbolic integers, was studied in \cite{GK3} as the unique (up to ring and order isomorphism) ring of quadratic integers extending fundamental divisibility and order properties of the ordinary integers $\Z$. In particular, $\ZB_h$ is an Archimedean $f$-ring, admits a division algorithm, is a principal ring, and satisfies the unique factorization property. The positive irreducible elements of $\ZB_h$, called \emph{hyperbolic prime numbers}, are precisely of the forms
   	    $$
   	    p^{\eg_1}=p\eg_1+\eg_2
   	    \quad\text{and}\quad
   	    p^{\eg_2}=\eg_1+p\eg_2,
   	    \qquad p \ \text{prime}.
   	    $$
   	    It is worth noting the remarkable fact that a prime number $p$, when viewed as a hyperbolic integer, becomes semiprime, since
   	    $$
   	    p=p^{\eg_1}p^{\eg_2}.
   	    $$
   	     As in the ring of integers $\Z$, every irreducible element $\pi\in\ZB_h$ can be written uniquely as $\pi=\varepsilon\upsilon$, where $\varepsilon$ is a unit and $\upsilon$ is a hyperbolic prime. The group of units of $\ZB_h$ is the Klein group
   	    $$
   	    \ZB_h^\times=\{\pm1,\pm\jg\}\simeq\Z/2\Z\times\Z/2\Z,
   	    $$
   	    which plays the role of the group of signs in the hyperbolic numbers $\D$, analogously to $\{\pm1\}$ in the real numbers $\R$.
   	    
   	    We emphasize that the algebraic and arithmetic properties of $\ZB_h$ mentioned above, first established in \cite{GK3}, follow naturally from our general study of bicomplex extensions by taking $L=\Q[\jg]$. As an additional application, one can compute the discriminant of the extension $\Qh$. By Proposition~\ref{disriminant}, we obtain
   	    $$\mathrm{disc} (\ZB_h/\Z)= \mathrm{disc}^2 ( \Z/\Z) =1.$$
   	    Moreover, in view of Proposition~\ref{eqzeta} and the decomposition \eqref{hyprational}, the zeta function of $\Qh$ is given by
   	    $$
   	    \zeta_{\Qh}(s)=\zeta^2(s),
   	    $$
   	    where $\zeta(s)=\sum_{n\ge1}n^{-s}$ is the Riemann zeta function. Since for each integer $n\ge1$ there exists exactly one ideal of $\Z$ with norm $n$, it follows from \eqref{eqgenerating} that the number $a_{\Qh}(n)$ of ideals $\fraka\subseteq\ZB_h$ with norm $n$ is given by
   	    $$
   	    a_{\Qh}(n)=\sum_{d\mid n}1=d(n).
   	    $$
 
 We conclude this section by highlighting some major differences between the classical imaginary quadratic extension $\Q(\ig)$ and the quadratic hyperbolic extension $\Qh=\Q[\jg]$, which will be discussed below.
 
 \begin{rk}\rm{}
The quadratic number field $\Q(\ig)$ and, consequently, its ring of integers $\Z[\ig]$, the so-called Gaussian integers, have played a central role in the development of several fundamental results in number theory. Although $\Z[\ig]$ shares many similarities with the ordinary integers $\Z$ and has strongly influenced the development of algebraic number theory, it is not the natural setting for generalizing arithmetic notions that are inherently linked to the order structure of $\Z$. In the language of lattice-ordered rings (see, for instance, \cite{GK3}), $\Z$ is an Archimedean $f$-ring. As observed in Subsection~5.1, such a structure extends uniquely to a quadratic ring of integers, namely the ring $\ZB_h$ of integers of the hyperbolic extension $\Qh=\Q[\jg]$. This constitutes one of the main structural differences between these two rings.
 
 Another noteworthy distinction concerns radix representations of complex and hyperbolic numbers. As shown in \cite{KaSa}, for a Gaussian integer $q=a\pm i$ with $a\leq -1$, every complex number $z$ admits an expansion of the form
 $$
 z=\sum_{-\infty}^{k} r_i q^i,\qquad r_i\in\mathfrak{N}_q=\{0,1,\ldots,a^2\}.
 $$
 Here, the Gaussian integer $q$ is called the base of the expansion, while the elements of $\mathfrak{N}_q$ are referred to as its digits.\\  
 In the hyperbolic setting, it is proved in \cite{Kdig} that, up to conjugation, there exist exactly two types of bases $q\in\ZB_h$ allowing such radix representations for all hyperbolic numbers. These are hyperbolic integers of the form
 $$
 q=a\eg_1+(1-a)\eg_2\notin\Z[\jg],
 $$
 with digit set $\mathfrak{N}_q=\{0,1,\ldots,a^2-a-1\}$, and hyperbolic Gaussian integers of the form $q=a+\jg$, with digit set $\mathfrak{N}_q=\{0,1,\ldots,a^2-2\}$, where $a\leq -2$.
 \end{rk}


\subsection{Bicomplex quartic extension}

Let us consider the quartic bicomplex extension
$$
\mathcal{Q}_{\B}:=\Q[\ig,\jg]=\Q+\Q\ig+\Q\jg+\Q\kg .
$$
The bicomplex extension $\mathcal{Q}_{\B}$ contains the imaginary quadratic field $\Q(\ig)$, its copy $\Q(\kg)$, as well as the quadratic hyperbolic extension $\Q_h=\Q[\jg]$ (see Subsection~5.1). Consequently, we may write
$$
\mathcal{Q}_{\B}=\Q(\ig)\eg_1+\Q(\ig)\eg_2=\mathcal{Q}_h+\mathcal{Q}_h\ig .
$$
These decompositions show that $\mathcal{Q}_{\B}$ can be viewed either as the two-dimensional $\Q(\ig)$-algebra $\Q(\ig)[\jg]$ with idempotent basis $(\eg_1,\eg_2)$, or as the free $\mathcal{Q}_h$-module $\mathcal{Q}_h[\ig]$ of rank~$2$ with basis $\{1,\ig\}$. In particular,
$$
\mathcal{Q}_{\B}=K_1\eg_1+K_2\eg_2 \quad \text{with} \quad K_1=K_2=\Q(\ig).
$$
Hence, by Theorem~\ref{decompalg}, $\mathcal{Q}_{\B}$ is generated by a bicomplex number $\omega\notin\C_{\ug}$ for $\ug\in\{\ig,\kg\}$.\\
Since $\mathcal{O}_{\Q(\ig)}=\Z[\ig]$, Theorem~\ref{decompintgers} implies that the ring $\ZB_{\B}$ of integers of $\mathcal{Q}_{\B}$ is a free $\Z$-module of rank~$4$ containing $\Z$, and is given by
$$
\ZB_{\B}=\Z[\ig]\eg_1+\Z[\ig]\eg_2=\ZB_h+\ZB_h\ig ,
$$
with $\{\eg_1,\ig\eg_1,\eg_2,\ig\eg_2\}$ as a $\Z$-basis. Moreover, since $\mathrm{disc}(\Q(\ig)/\Q)=-4$, Proposition~\ref{disriminant} yields
$$
\mathrm{disc}(\mathcal{Q}_{\B}/\Q)=16.
$$
By Proposition~\ref{unitgroup}, the unit group of $\ZB_{\B}$ is finite and satisfies
$$
\ZB_{\B}^{\times}
=\Z[\ig]^{\times}\eg_1+\Z[\ig]^{\times}\eg_2
\backsimeq \Z/4\Z\times\Z/4\Z .
$$Concerning ideals in $\ZB_{\B}$, Proposition~\ref{generideal} together with the fact that $\Z[\ig]$ is a principal ring shows that $\ZB_{\B}$ is also a principal ring. Hence, by Proposition~\ref{unique factorization}, unique factorization holds in $\ZB_{\B}$. In particular, by Proposition~\ref{primes}, the irreducible elements $\pi$ are precisely the primes with $N(\pi)\neq0$, and, up to units, they are of the forms
$$
\pi=\eg_1+g\eg_2
\quad\text{or}\quad
\pi=g\eg_1+\eg_2,
\qquad g \ \text{prime in }\Z[\ig].
$$
It follows that every prime $g$ of $\Z[\ig]$ becomes a semiprime in $\ZB_{\B}$, namely
$$
g=\pi_1\pi_2,
\qquad
\pi_1=\eg_1+g\eg_2,
\quad
\pi_2=g\eg_1+\eg_2 .
$$
Therefore, every prime number $p$ is reducible in $\ZB_{\B}$, and it is semiprime if and only if $p\equiv3\pmod{4}$.

Now let $\omega\in\ZB_{\B}$ and define
\begin{eqnarray*}
A(\omega)&=&\omega\,\overline{\omega}^{\ig}\,\overline{\omega}^{\jg}
+\omega\,\overline{\omega}^{\ig}\,\overline{\omega}^{\kg}
+\omega\,\overline{\omega}^{\jg}\,\overline{\omega}^{\kg}
+\overline{\omega}^{\ig}\,\overline{\omega}^{\jg}\,\overline{\omega}^{\kg},\\
B(\omega)&=&\omega\,\overline{\omega}^{\ig}
+\omega\,\overline{\omega}^{\jg}
+\omega\,\overline{\omega}^{\kg}
+\overline{\omega}^{\ig}\,\overline{\omega}^{\jg}
+\overline{\omega}^{\ig}\,\overline{\omega}^{\kg}
+\overline{\omega}^{\jg}\,\overline{\omega}^{\kg}.
\end{eqnarray*}
Observing that
$$
4\Rj(\omega)=\omega+\overline{\omega}^{\ig}
+\overline{\omega}^{\jg}
+\overline{\omega}^{\kg},
\qquad
N_{\B}(\omega)=\omega\,\overline{\omega}^{\ig}\,
\overline{\omega}^{\jg}\,
\overline{\omega}^{\kg},
$$
a direct computation shows that $\omega$ is a root of
$$
P(X)=X^4-4\Rj(\omega)X^3+A(\omega)X^2-B(\omega)X+N_{\B}(\omega),
$$
where $4\Rj(\omega)$, $A(\omega)$, $B(\omega)$, and $N_{\B}(\omega)$ are rational integers, since they belong to $\ZB_{\B}$ and are invariant under conjugations  (\ref{conjugations}). Hence $P\in\Z[X]$ and the minimal polynomial $P_{\omega}$ divides $P$.\\
In particular, if $\omega\in\Z[\ug]$ for some $\ug\in\{\ig,\kg\}$, or if $\omega\in\ZB_h$, then
$$
P(X)=\bigl(X^2-2\Rj(\omega)X+\omega\overline{\omega}\bigr)^2,
$$
so that $\omega$ is a root of $X^2-2\Rj(\omega)X+\omega\overline{\omega}$, which is its minimal polynomial if and only if $\omega\notin\Z$.

Finally, we turn to the zeta function of the bicomplex extension $\mathcal{Q}_{\B}$. By Proposition~\ref{zeta L},
$$
\zeta_{\mathcal{Q}_{\B}}(s)=\zeta^2_{\Q(\ig)}(s).
$$
Since $\Z[\ig]$ is a principal ring, each ideal $(a)\subseteq\Z[\ig]$ has four associates $\mu a$, $\mu\in\Z[\ig]^{\times}$, of the same norm. Therefore,
$$
a_{\Q(\ig)}(n)=\frac{1}{4}r(n),
$$
where $r(n)$ denotes the number of Gaussian integers of norm~$n$, given explicitly by the well-known Jacobi formula
$$
r(n)=4\Bigl(\sum_{\substack{d\mid n\\ d\equiv1\ (4)}}1
-\sum_{\substack{d\mid n\\ d\equiv3\ (4)}}1\Bigr),
\qquad n\ge1.
$$
Hence, the number $a_{\mathcal{Q}_{\B}}(n)$ of ideals $\fraka\subseteq\ZB_{\B}$ of norm~$n$ is
$$
a_{\mathcal{Q}_{\B}}(n)=\frac{1}{16}\sum_{d\mid n} r(d)\,r\!\left(\frac{n}{d}\right).
$$


\newpage
 \bibliographystyle{alpha}

\end{document}